\pgfplotsset{compat=1.14}
\numberwithin{equation}{section}
\newtheorem{thm}{Theorem}[section]
\newtheorem{cor}[thm]{Corollary}
\newtheorem{rem}[]{Remark}
\newtheorem{assp}[thm]{Assumption}
\newtheorem{example}[thm]{Example}
\newtheorem{prop}[thm]{Proposition}
\DeclareMathOperator*{\argmin}{argmin}
\newcommand{\tri}[3]{{#1}_{#2}^{#3}}
\newtheorem{myDef}{Definition}
\def\E{{\mathbb E}}
\def\tr{{\rm{tr}}}
\def\N{{\mathcal N}}
\newcommand{\Y}{\mathcal Y}
\newcommand{\X}{\mathcal X}
\begin{document}

	\title{On the asymptotical regularization for linear inverse problems in presence of white noise}
	\author{Shuai Lu\thanks{Shanghai Key Laboratory for Contemporary Applied Mathematics, Key Laboratory of Mathematics for Nonlinear Sciences and School of Mathematical Sciences, Fudan University, 200433 Shanghai, China (Email: slu@fudan.edu.cn).}~, Pingping Niu\thanks{Shanghai Key Laboratory for Contemporary Applied Mathematics, Key Laboratory of Mathematics for Nonlinear Sciences and School of Mathematical Sciences, Fudan University, 200433 Shanghai, China (Email: ppniu14@fudan.edu.cn).}~ and Frank Werner\thanks{Institut f\"ur Mathematik, University of Wuerzburg, Emil-Fischer-Str. 30, 97074 W\"urzburg (Email: frank.werner@mathematik.uni-wuerzburg.de)}~\thanks{Corresponding author}}
	
	\date{\today}
	\maketitle
	
	\abstract{
		We interpret steady linear statistical inverse problems as artificial dynamic systems with white noise and introduce a stochastic differential equation (SDE) sytem where the inverse of the ending time $T$ naturally plays the role of the squared noise level. The time-continuous framework then allows us to apply classical methods from data assimilation, namely the Kalman-Bucy filter and 3DVAR, and to analyze their behavior as a regularization method for the original problem. Such treatment offers some connections to the famous asymptotical regularization method, which has not yet been analyzed in the context of random noise. We derive error bounds for both methods in terms of the mean-squared error under standard assumptions and discuss commonalities and differences between both approaches. If an additional tuning parameter $\alpha$ for the initial covariance is chosen appropriately in terms of the ending time $T$, one of the proposed methods gains order optimality. Our results extend theoretical findings in the discrete setting given in the recent paper Iglesias et al. \cite{ILLS2017}. Numerical examples confirm our theoretical results.
	}
	\begin{quote}
		\noindent
		{\small \textbf{Keywords:}
			Statistical inverse problems,
			data assimilation,
			Kalman-Bucy filter,
			asymptotical regularization,
			convergence rates}
	\end{quote}
	
	\begin{quote}
		\noindent
		{\small \textbf{AMS-classification (2020):}
			65J20, 47A52, 62M20
		}
	\end{quote}
	
	\section{Introduction}\label{se1}
	
	\subsection{From steady inverse problems to dynamical systems}
	
	The probably most often investigated setting in statistical inverse problems is the recovery of an unknown solution $u^{\dag}$ from the indirect noisy measurement
	\begin{align}\label{eq_LInP}
	y^{\delta} = A u^{\dag} + \delta \eta
	\end{align}
	where $A$ is a compact linear operator acting between separable Hilbert spaces $\X$ and $\Y$, $\eta$ is a (weak) Gaussian process on $\Y$ with a covariance operator $\Sigma$ (we write $\eta \sim \N_\Y\left(0, \Sigma\right)$), and $\delta > 0$ is a noise level. Note that (depending on $\Sigma$), $y^\delta$ might not be identifiable with an element in $\Y$, but has rather to be understood as an element in $\Y^*$, which is why \eqref{eq_LInP} is usually read in a weak sense. \eqref{eq_LInP} is a prototypical inverse problem, which has widely been considered in the literature, and we refer to the monographs \cite{EHN1996,LP2013} and the references therein.
	
	To solve the linear inverse problem \eqref{eq_LInP} stably, one usually uses regularization methods. Most common examples are either spectral methods of the form
	\[
	u_\epsilon^\delta = q_\epsilon \left(A^*A\right)A^*y^\delta
	\]
	with a filter function $q_\epsilon$ (see e.g. \cite{EHN1996,BHMR2007,MP2006,w18,bhr18,lw20}), or variational ones of the form
	\[
	u_\epsilon^\delta \in \argmin_{u \in \X} \left[ \left\Vert Au \right\Vert_Y^2 - 2 \left\langle Au, y^\delta\right\rangle_{\Y \times \Y^*} + \epsilon R \left(u\right)\right]
	\]
	where $R : X \to \left(-\infty, \infty\right]$ is a proper convex functional (see e.g. \cite{BHM2007,w12,hw17,wh20}). For a better understanding, note that $y \mapsto \left\Vert y \right\Vert_{\Y}^2 - 2 \left\langle y, y^\delta\right\rangle_{\Y \times \Y^*}$ is an infinite-dimensional version of the Gaussian negative log-likelihood functional. Note that both methods rely on a so-called regularization parameter $\epsilon > 0$, which has to be chosen appropriately.
	
	Given the datum $y^\delta$ in \eqref{eq_LInP}, many of the aforementioned methods perform optimal in the classical minimax sense, i.e. for $u^\dag$ in a prescribed smoothness class, the obtained convergence rate of the mean-squared error (MSE) $\E \| u_\epsilon^\delta - u^\dag\|^2 : = \E\left[ \left\Vert u_\epsilon^\delta - u^\dag\right\Vert_{\X}^2 \right]$ agrees with the best possible one under all estimators if $\epsilon >0$ is chosen appropriately. However, in many practical applications the single datum in \eqref{eq_LInP} arises from averaging several sequential (and independent) observations $y_1, ..., y_N$ according to the model
	\begin{equation}\label{eq:seq}
	y_i = A u^\dag + \eta_i, \qquad 1 \leq i \leq N,\quad \eta_i \sim \N_\Y\left(0, \Sigma_i\right)
	\end{equation}
	which yields
	\begin{equation}\label{eq:averaging}
	y^\delta := \frac{1}{N} \sum_{i=1}^N y_i = A u^\dag + \frac{1}{N} \sum_{i=1}^N \eta_i, \qquad \frac{1}{N} \sum_{i=1}^N \eta_i \sim\N_\Y\left(0, \frac1{N^2} \sum_{i=1}^N\Sigma_i\right)
	\end{equation}
	by independence, see also \cite{hjp20}. If the sequential noise contributions $\eta_i$ have identical covariance operators $\Sigma_i \equiv \Sigma$, then \eqref{eq:averaging} yields the original model \eqref{eq_LInP} with $\delta = 1/\sqrt{N}$. From this point of view, it might be advantageous to work with the sequence \eqref{eq:seq} of problems instead of the single problem \eqref{eq_LInP}. Note that \eqref{eq:seq} can also be interpreted as an (artificial) dynamical system
	\begin{subequations}\label{eq_Artidynamic}
		\begin{align}
		u_n & = u_{n-1} \label{eq_Artidynamica}\\
		y_n & = A u_n + \eta_n \label{eq_Artidynamicb}
		\end{align}
	\end{subequations}
	with $u_0 = u^\dag$ on a finite time horizon $n \in \left\{1,2,\ldots N\right\}$, similar to the one considered in \cite{ILS2013}.
	
	\subsection{Data assimilation as regularization}
	
	The artificial dynamic system \eqref{eq_Artidynamic} then allows us to apply classical data assimilation methods for the recovery of $u^\dag$, e.g. the Kalman filter and 3DVAR, which lead to a solution of the original inverse problem \eqref{eq_LInP} in form of a posterior Gaussian distribution. This also offers a connection to Bayesian inverse problems, see e.g. the seminal work \cite{stuart2010} or \cite{DLC2018,chkp19} for recent developments.
	
	For the sake of completeness, we briefly describe these approaches here. The Kalman filter yields the posterior Gaussian distribution $\N_{\X}\left(m_n,C_n\right)$ where
	\begin{subequations}\label{eq_KalmanDis}
		\begin{align}
		\label{eq:Kgain}K_{n} &=\tri{C}{n-1}{}A^*\left(A\tri{C}{n-1}{}A^*+\Sigma\right)^{-1}\\
		\label{eq:mean}\tri{m}{n}{}&=\tri{m}{n-1}{}+K_{n}(y_{n}-A\tri{m}{n-1}{})  \\
		\label{eq:covariance}\tri{C}{n}{}&= (I-K_{n}A)\tri{C}{n-1}{},
		\end{align}
	\end{subequations}
	with an initial (prior) distribution $\N_\X \left(m_0,C_0\right)$. Note that $\N_\X \left(m_0,C_0\right)$ is a tight probability if and only if the operator $C_0$ is of trace class, and this property is inherited by the posterior distribution. In \eqref{eq_KalmanDis}, $K_n$ is called the Kalman gain, $m_n$ is the posterior mean and $C_n$ is the posterior covariance. The well-known 3DVAR filter is obtained by fixing the posterior covariance, i.e. setting $K_{n}\equiv \mathcal{K}$, which yields the posterior Gaussian distribution $\N_\X\left(\zeta_n, \mathcal{C}\right)$ with
	\begin{subequations}\label{eq_3DVARN}
		\begin{align}
		\label{eq:3DVARKgain} K_{n}& \equiv  \mathcal{K} :=C_0 A^*\left(A C_0 A^*+ \Sigma\right)^{-1} \\
		\label{eq:3DVARmean} \tri{\zeta}{n}{} &= \tri{\zeta}{n-1}{}+\mathcal{K}(y_n-A\tri{\zeta}{n-1}{}), \\
		\label{eq:3DVARcovariance} \mathcal{C} &\equiv (I-\mathcal{K}A)C_0.
		\end{align}
	\end{subequations}
	Note that the computational effort for \eqref{eq_3DVARN} is considerably lower than for \eqref{eq_KalmanDis}, as the covariance operator $\mathcal C$ does not have to be updated in each iteration. Error bounds for both methods in the above setting have been investigated in \cite{ILLS2017} where a logarithmic difference between them is obtained.
	
	We shall mention that the artificial dynamic system (\ref{eq_Artidynamic}) has the further advantage that a nonlinear inverse problem can be solved by the ensemble Kalman filter (EnKF) without deriving the Fr\'{e}chet derivative of the forward operator, c.f. \cite{ILS2013}.
	We further refer to \cite{srs17} for estimation of parameters in dynamical systems, and to \cite{SS2017,bsww19} for recent error bounds of the EnKF.
	
	\subsection{Towards a continuous analog}\label{se13}
	
	In many applications arising from astrophysics to biomedical image analysis, the introduction of time-steps in \eqref{eq_Artidynamic} is artificial in the sense that the actual measurement process is rather discrete but continuous in time (see e.g. \cite{hw16,msw20} for mathematical reviews on the corresponding models). Therefore it is a natural question to ask for a continuous analog of \eqref{eq_Artidynamic} and the corresponding assimilation methods \eqref{eq_KalmanDis} and \eqref{eq_3DVARN}. It has already been pointed out in \cite[Ch.6]{LSZ2015} that the discrete system \eqref{eq_Artidynamic} can heuristically be transferred to a continuous one, but however our derivation here will be slightly different.
	Let us start by interpreting discrete state variables $u_n$ as equidistant (approximate) samples of a random process $u$ in the time interval $\left[0,T\right]$ (note that the ending time $T$ will in principle have the same meaning as $N$ before). Similarly we introduce (weak) random variables $z_1, ..., z_n$ as equidistant (approximate) samples of a random process $z$ to be observed such that $y_n = \left(\frac{z_{n}-z_{n-1}}{\tau}\right)$ with a time step $\tau>0$. If now $\tau \to 0$, then the number of observations within $\left[0,T\right]$ increases, and hence to obtain a meaningful limit, the covariances of the noise contributions in \eqref{eq_Artidynamicb} have to increase proportional to $\tau^{-1}$ as well. This also reflects the physical fact that an increased measurement frequency typically leads to a worse signal-to-noise ratio per observation. Thus assume that the white noise $\eta_n$ in \eqref{eq_Artidynamicb} have a covariance $\tau^{-1} \Sigma$. Then we can revise (\ref{eq_Artidynamicb}) into
	\begin{subequations}\label{eq_zn}
		\begin{align}
		z_{n} & = z_{n-1} + \tau A u_n + \sqrt{\tau\Sigma} \eta_{n},\quad n\in \mathbb{Z}^+, \\
		z_0 & = 0,
		\end{align}
	\end{subequations}
	with an i.i.d. sequence $\eta=\{\eta_n\}_{n\in \left\{1,... ,T/\tau\right\}}$ obeying $\eta_1 \sim \N_\Y\left(0,I\right)$. Here and in what follows, $I$ denotes the identity operator. Now it can readily be seen that $z_n$ in \eqref{eq_zn} is just given as the Euler-Maruyama approximation with time step $\tau$ of the continuous process $z$ in the SDE
	\begin{subequations}\label{eq_consys}
		\begin{align}
		& \mathrm{d}u = 0, \quad u(0) = u^\dag; \label{eq_consysa}\\
		& \mathrm{d}z = Au\mathrm{d}t + \sqrt{\Sigma}\mathrm{d}W, \quad z(0)=0, \label{eq_consysb}
		\end{align}
	\end{subequations}
	on $\left[0, T\right]$ where $W$ is the standard Wiener process. As a consistency check, we note that
	\[
	y^\delta := \frac{1}{T} z\left(T\right)= A u^\dag + \frac1T \sqrt{\Sigma} \left(W\left(T\right) - W\left(0\right)\right), \qquad \sqrt{\Sigma} \left(W\left(T\right) - W\left(0\right)\right) \sim \N_{\Y} \left(0, T \Sigma\right).
	\]
	Hence, the ending point of the observable process $z$ carries the same information as the data observed in the original inverse problem \eqref{eq_LInP} with $\delta = 1/\sqrt{T}$.
	
	\subsection{Continuous data assimilation approaches as regularization methods and aims of this paper}\label{se14}
	
	The above reformulation, in particular the continuous system \eqref{eq_consys}, allows us to implement the classic Kalman-Bucy filter in data assimilation and to derive SDEs involving the estimator $m\left(t\right)$ for the state variable $u(t)$, which is assumed to be time-independent according to \eqref{eq_consysa}. More precisely, referring to \cite[Ch.6]{LSZ2015}, we can obtain the following system
	\begin{subequations}\label{eq_KBfilter}
		\begin{align}
		\mathrm{d}m &= CA^*\Sigma^{-1}(\mathrm{d}z-Am\mathrm{d}t), \quad m(0)=m_0; \label{eq_KBfiltera}  \\
		\mathrm{d}C &= -CA^*\Sigma^{-1}AC\mathrm{d}t, \quad C(0)=C_0. \label{eq_KBfilterb}
		\end{align}
	\end{subequations}
	Concerning $C_0$, the same comments as after \eqref{eq_KalmanDis} apply. It is immediately clear that the posterior distribution of $u|z$ is Gaussian with the mean $m(t)$ and covariance $C(t)$. Note that - as $z$ is observable on $\left[0,T\right]$ only - the Kalman-Bucy filter and its mean function $m$ are well-defined on $\left[0,T\right]$ only. In the limit $T \to \infty$ we expect convergence $m\left(T\right) \to u^\dag$, which will be investigated in Section \ref{se3}.
	
	To obtain the posterior distribution, one need to firstly solve the Riccati equation (\ref{eq_KBfilterb}) for the posterior covariance $C(t)$ and substitute it into (\ref{eq_KBfiltera}) to further derive the posterior mean $m(t)$. In general, the Riccati equation can not be solved explicitly. Nevertheless, because of the stationary state equation (\ref{eq_consysa}) we are able to write down the solution of (\ref{eq_KBfilter}). Actually, without loss of generality, we assume that $C(t)$ is positive definite for any finite time $t>0$. Then, the inverse of $C(t)$, denoted by $C^{-1}(t)$, is well-defined at any finite time $t$ which yields
	\begin{align*}
	0=\mathrm{d}[C(t)C^{-1}(t)] =[\mathrm{d}C(t)]C^{-1}(t)+C(t)[\mathrm{d}C^{-1}(t)]\notag.
	\end{align*}
	Hence by substituting (\ref{eq_KBfilterb}), we obtain
	\begin{align*}
	\mathrm{d}C^{-1}(t) = -C^{-1}(t)[\mathrm{d}C(t)]C^{-1}(t) = A^*\Sigma^{-1}A \mathrm{d}t,
	\end{align*}
	and it is straightforward to derive $C^{-1}(t) = C_0^{-1} + tA^*\Sigma^{-1}A$ since $A^*\Sigma^{-1}A$ is time-independent. Equivalently we can write
	\begin{align}\label{eq_KFcov}
	C(t) = ( C_0^{-1} + tA^*\Sigma^{-1}A)^{-1}, \quad t>0,
	\end{align}
	and insert it into (\ref{eq_KBfiltera}) to obtain the following initial value problem
	\begin{align}\label{eq_KFm}
	\mathrm{d}m = (C^{-1}_0 + tA^*\Sigma^{-1}A)^{-1} A^*\Sigma^{-1}(\mathrm{d}z-A m \mathrm{d}t), \quad m(0) = m_0
	\end{align}
	which will be called the {\it non-stationary \textbf{A}symptotical \textbf{R}egularization \textbf{M}ethod (non-stationary ARM)} .
	
	On the other hand, we can also consider some approximate Gaussian (continuous) filter such as the 3DVAR by fixing the posterior covariance in (\ref{eq_KBfilter}). Then the posterior mean and covariance, denoted by $\zeta(t)$ and $\mathcal{C}(t)$, is formally obtained by
	\begin{subequations}\label{eq_3dvar}
		\begin{align}
		\mathrm{d}\zeta &= \mathcal{C}A^*\Sigma^{-1}(\mathrm{d}z-A \zeta\mathrm{d}t), \quad \zeta(0)=m_0, \label{eq_3dvara}\\
		\mathrm{d}\mathcal{C} &= 0, \quad \mathcal{C}(0)=C_0 \label{eq_3dvarb}
		\end{align}
	\end{subequations}
	which is called the {\it stationary \textbf{A}symptotical \textbf{R}egularization \textbf{M}ethod (stationary ARM)}.
	
	The aim of this paper is to derive error bounds for the asymptotical regularization methods \eqref{eq_KFm} (or \eqref{eq_KBfilter}), \eqref{eq_3dvar} under standard assumptions, and to compare these results with classical regularization methods for the original inverse problem \eqref{eq_LInP} such as Tikhonov and Showalter regularization. Noticing that the method (\ref{eq_3dvar}), which does not update the posterior covariance, is computationally more efficient than the method (\ref{eq_KFm}), the quantitative difference between them will also be revealed. One essential point is that both \eqref{eq_KFm} and \eqref{eq_3dvar} allow for an \textit{online}-type reconstruction of the unknown quantity $u^\dag$, whereas classical concepts from regularization theory can only be applied after gathering and averaging all data. Hence, it is an interesting question if this advantage comes for free (at least asymptotically in the sense that the rates of convergence agree as $T\to \infty$ and $\delta \to 0$), or if there is a price to pay for these immediate availability of reconstructions.
	
	Note that we also extend the study of linear statistical inverse problems to a continuous form, which yields a Wiener process and is novel in error bound analysis of the asymptotical regularization. It is worth to emphasize that inverse problems of differential equations with Wiener processes have attracted much attention recently and we mention \cite{BCLZ2014,DH2014,KP2018}.
	
	The outline of this study is as follows. In Section \ref{se2} we present our standing assumptions and provide a brief discussion of necessary techniques from regularization theory and stochastic calculus. The main error bounds are derived in Section \ref{se3} where the quantitative difference between both methods are presented. In Section \ref{se4}, numerical examples confirm the theoretical results and Section \ref{se5} ends the study with a discussion and possible future extensions.

	\section{Assumptions and necessary concepts}\label{se2}
	In this section we state our main assumptions and provide necessary concepts for further investigation.
	
	\subsection{Assumptions}
	
	To obtain error estimates of the non-stationary and stationary ARM \eqref{eq_KFm} and \eqref{eq_3dvar} derived from the Kalman-Bucy filter and 3DVAR, we need to pose some standard assumptions. In particular, we shall measure the smoothness of the exact solution $u^{\dag}$ related to the forward operator in certain sense by {\it source conditions.} An extended discussion on related topics can be found in \cite{EHN1996,MP2003,LP2013} and the references therein.
	
	Before we proceed further, the following assumption on the noise covariance operator and the initial covariance operator is posed:
	\begin{assp}\label{assp_1}
		The noise covariance operator $\Sigma$ is self-adjoint and positive definite. The initial covariance is chosen as $C_0 = \alpha^{-1} \Omega$ with a tuning parameter $\alpha > 0$ and a self-adjoint, positive definite trace class operator $\Omega$.
	\end{assp}
	Note that, as already mentioned below \eqref{eq_KalmanDis}, under Assumption \ref{assp_1} both means $m$ and $\zeta$ of \eqref{eq_KFm} and \eqref{eq_3dvar} are tight probabilities in the sense that their posterior covariances are of trace class as well and hence $\mathbb P \left[m \left(t\right) \in \X\right] = \mathbb P \left[ \zeta \left(t\right) \in \X\right] = 1$ for all $t \in \left[0,1\right]$.
	
	The tuning parameter $\alpha$ will later be chosen depending on $t$ (or the ending time $T$) to obtain convergence (and also an optimal convergence behavior) of $m$ and $\zeta$, respectively.
	
	Since there appear several operators $A$, $\Sigma$, $\Omega$ in both methods \eqref{eq_KFm} and \eqref{eq_3dvar},  similar to the reformulation in \cite{LLM}, we pre-whiten the original artificial dynamic \eqref{eq_Artidynamic} by multiplying with $\Sigma^{-1/2}$ on both sides and assume the following:
	
	\begin{assp}\label{assp_2}		
		\begin{enumerate}
			\item $\mathcal{R}(A) \subset \mathcal{D}(\Sigma^{-1/2})$. Denote $K :=\Sigma^{-1/2} A$ be the modified forward operator.
			\item The trace class prior covariance operator $\Omega$ is chosen as a power of $K^*K$ such that there exists a constant $p>0$ and $\Omega = (K^*K)^p$.
			\item Without loss of generality, we assume that $\|K\|\leq 1$.
		\end{enumerate}
	\end{assp}
	
	Note that Item 1 in Assumption \ref{assp_2} is necessary to allow for pre-whitening. In a particular case, one may choose $\Sigma=I$ representing the white noise and consequently $K=A$. Item 2 ensures that $\Omega$ and functions of $K^*K$ commute. Item 3 is more technical for the proof in Section \ref{se3} and can be guaranteed by re-scaling the norms in $\X$ and $\Y$.
	
	Under Assumption \ref{assp_2} we thus introduce a new operator $B:=\Sigma^{-1/2} A \Omega^{1/2}=K\Omega^{1/2}$ such that
	\begin{align}\label{eq_Boperator}
	B^*B = (K^* K)^{p+1}, \quad \text{and} \quad \Omega = (B^*B)^{\frac{p}{p+1}}.
	\end{align}
	
	Another assumption concerns the smoothness of the unknown solution $u^{\dag}$ which is usually described by the source condition. Here we focus on the spectral source conditions as considered e.g. in \cite{MP2003,LP2013}. Therefore recall, that an \textit{index function} is a non-decreasing and continuous function $\varphi : \left(0,\infty\right)\to \left(0,\infty\right)$ with $\lim_{\lambda \searrow 0} \varphi\left(\lambda\right) = 0$. Following a similar way as in \cite{FS2012,LLM}  the general source condition is introduced upon the modified forward operator $K$ and presented below.
	\begin{assp}\label{assp_3}
		We assume that there exists an index function $\varphi$ such that
		\begin{align*}
		m_0-u^{\dag} \in \mathcal{A}_{\varphi}:= \{x, x=\varphi(K^*K)v, \,\, \|v \| \leq 1\}.
		\end{align*}
	\end{assp}
	
	Note that this assumption is suitable for both the stationary and the non-stationary ARM, as we have assumed $\zeta \left(0\right) = m\left(0\right)= m_0$.
	
	The most common example of an index function $\varphi$ is
	\begin{align*}
	\varphi(\lambda) = \lambda^{\nu}, \quad \nu>0,
	\end{align*}
	in which case the corresponding smoothness assumption is called a \textit{H\"older source condition}. It is well-known that such assumptions are reasonable in moderately ill-posed problems, c.f. \cite{EHN1996}. On the other hand, for the exponentially ill-posed problems it is reasonable to consider a \textit{logarithmic source conditions} where
	\begin{align*}
	\varphi(\lambda) = \left(-\ln \lambda\right)^{-p}, \quad p>0,
	\end{align*}
	c.f. \cite{Hohage1997}.
	
	\subsection{Tools from regularization theory}
	
	In our analysis, several concepts from regularization theory will turn out useful. Therefore we recall the notaion of a qualification and the residual function, c.f. \cite{EHN1996,LP2013}.
	\begin{myDef}
		A family $\left(q_\epsilon\right)_{\epsilon > 0}$ of measurable functions
		\begin{align*}
		q_{\epsilon}(\lambda): [0,\|B^*B\|] \rightarrow \mathbb{R}
		\end{align*}
		is called a \textit{regularization} if
		\begin{align*}
		\sup_{0<\lambda\leq \|B^*B\|} |q_{\epsilon}(\lambda)| & \leq \frac{C_{-1}}{\epsilon} \qquad\text{for all}\quad  \epsilon > 0
		\end{align*}
		with a positive constant $C_{-1}$, and if its \textit{residual function} $r_{\epsilon}(\lambda) := 1-q_{\epsilon}(\lambda)\lambda$ satisfies
		\begin{align*}
		\sup_{0<\lambda\leq \|B^*B\|} |r_{\epsilon}(\lambda)| & \leq C_0\qquad\text{for all}\quad  \epsilon > 0
		\end{align*}
		with a positive constant $C_0$.
		
		The index $\nu_0>0$ is called the {\it qualification} of $\left(q_\epsilon\right)_{\epsilon > 0}$ if there exists a constant $c_\nu$ such that
		\begin{align*}
		\sup_{\lambda\in (0, \|B^*B\|]} |\lambda^\nu r_{\epsilon}(\lambda)| \leq c_{\nu} \epsilon^{\nu} \qquad\text{for all}\quad \epsilon > 0  \quad \text{and} \quad  0 \leq \nu \leq \nu_0.
		\end{align*}
	\end{myDef}
	
	We provide two examples of regularization methods, which will be useful in the following:
	\begin{example}
		\begin{description}
			\item[Tikhonov regularization]
			For $0 < \lambda,\epsilon \leq \|B^*B\|$, we let
			\begin{align*}
			q_{1,\epsilon}(\lambda) & := \frac{1}{\lambda+\epsilon},\\
			r_{1,\epsilon}(\lambda) & = \frac{\epsilon}{\lambda+\epsilon}.
			\end{align*}
			Referring to \cite{EHN1996, LP2013} we have
			\begin{align}\label{eq_resi_error}
			\sup_{\lambda} r_{1,\epsilon}(\lambda) \lambda^{\nu} \leq \epsilon^{\nu},\quad \epsilon>0, \quad 0\leq \nu\leq 1, \quad \lambda \in (0,\|B^*B\|].
			\end{align}
			
			\item[Showalter regularization]
			For $0 < \lambda \leq \|B^*B\|$, $0< \epsilon$, we let
			\begin{align*}
			q_{2,\epsilon}(\lambda) & := \frac{1-e^{-\frac{\lambda}{\epsilon}}}{\lambda},\\
			r_{2,\epsilon}(\lambda) & = e^{-\frac{\lambda}{\epsilon}}.
			\end{align*}
			Referring to \cite{T1994}, we have
			\begin{align}\label{eq_resi2_error}
			\sup_{0\leq \lambda\leq 1}r_{2,\epsilon}(\lambda) \lambda^{\mu} \leq c \left(1+\frac{1}{\epsilon}\right)^{-\mu} \leq c \epsilon^{\mu},\quad \epsilon>0, \quad  \mu \geq 0, \quad \lambda\in (0,\|B^*B\|],\quad \|B^*B\| \leq 1.
			\end{align}
			The constant $c$ in (\ref{eq_resi2_error}) is $c = \max\{\mu^{\mu},1\}$.
		\end{description}
		
	\end{example}
	
	We shall emphasize that the inequalities (\ref{eq_resi_error}) and (\ref{eq_resi2_error}) shed light on the qualification of both residual functions, where $r_{1,\epsilon}(\cdot)$ yields a qualification of $\nu_0=1$ for Tikhonov regularization and $r_{2,\epsilon}(\cdot)$ yields a qualification of $\nu_0=\infty$ for Showalter regularization.
	
	In the end of current subsection, we introduce the effective dimension $\mathcal{N}(\epsilon)$ of the operator $B$ defined by
	\begin{align}\label{eq_EffDim}
	\mathcal{N}(\epsilon)=\mathcal{N}_B(\epsilon):= \tr\left((\epsilon I+B^*B)^{-1}B^*B\right), \epsilon>0.
	\end{align}
	The value of $\mathcal{N}(\epsilon)$ depends on the singular values of $B$ and, in the infinite dimensional setting, it yields H\"{o}lder type or logarithmic type asymptotics with respect to the power-type or exponential decay of singular values of $B$. We refer to \cite[Lem. 2.2]{LLM} for some properties of the effective dimension, which will be recalled in bounding the error estimate below.
	
	\subsection{Necessary concepts of stochastic calculus}\label{sec:stochasticinte}
	Note that in both methods (\ref{eq_KFm}) and (\ref{eq_3dvar}), there appear some stochastic integrals with respect to a Wiener process. To obtain corresponding error bounds, we will make use of some techniques from stochastic calculus to be presented in the current subsection. Most concepts can be found in \cite[Chap.2]{gm11} and we collect them here for sake of completeness.
	
	Recall that $\X$ and $\Y$ are separable Hilbert spaces, denote $Q$ be a self-adjoint positive semi-definite trace class operator on $\Y$, and by $\lambda_j>0, f_j$, $j=1,2,\ldots$ all its eigenvalues and eigenvectors. Then we can define the separable Hilbert space $\Y_Q= Q^{1/2} \Y$ equipped with the scalar product
	\begin{align*}
	\langle w, v \rangle_{\Y_Q} = \sum_{j=1}^{\infty}\frac{1}{\lambda_j} \langle w,f_j \rangle_\Y  \langle v,f_j \rangle_\Y.
	\end{align*}
	For a sequence $\{\omega_j(t)\}$, $j=1,2,\ldots$ of independent Brownian motions, the {\it $\Y$-valued $Q$-Wiener process $\mathcal{W}(t)$} is defined by
	\begin{align*}
	\mathcal{W}(t) = \sum_{j=1}^{\infty} \lambda^{1/2}_j \omega_j(t) f_j.
	\end{align*}
	
	Denote by $\mathcal{L}_2(\Y_Q,\X)$ the space of Hilbert-Schmidt operators from $\Y_Q$ to $\X$. If $\{e_j\}_{j=1}^{\infty}$ is a complete orthonormal system in $\X$, then the Hilbert-Schmidt norm of an operator $L \in \mathcal{L}_2(\Y_Q,\X)$ is given by
	\begin{align*}
	\|L\|^2_{\mathcal{L}_2(\Y_Q,\X)} & = \sum_{j,i=1}^{\infty} \langle L(\lambda^{1/2}_j f_j),e_i\rangle_\X^2 = \sum_{j,i=1}^{\infty} \langle L Q^{1/2} f_j,e_i\rangle_\X^2 \\
	& = \|LQ^{1/2}\|^2_{\mathcal{L}_2(\Y,\X)} = \tr\left((LQ^{1/2}) (LQ^{1/2})^*\right)\\
	& = \tr\left(LQL^*\right).
	\end{align*}
	
	Let now $\Lambda_2(\Y_Q,\X)$ be the class of $\mathcal{L}_2(\Y_Q,\X)$-valued processes that satisfy the condition
	\begin{align*}
	\E \int_0^T \|\Phi(s)\|^2_{\mathcal{L}_2(\Y_Q,\X)} \mathrm{d}s < \infty.
	\end{align*}
	One can verify that $\Lambda_2(\Y_Q,\X)$ is a Hilbert space equipped with the norm
	\begin{align*}
	\|\Phi\|_{\Lambda_2(\Y_Q,\X)} = \left(\E \int_0^T \|\Phi(s)\|^2_{\mathcal{L}_2(\Y_Q,\X)} \mathrm{d}s \right)^{1/2}.
	\end{align*}
	
	For $\Phi \in \mathcal{L}_2(\Y_Q,\X)$, the stochastic integral $\int_0^t \Phi(s) \mathrm{d}\mathcal{W}(s)$, $0 \leq t \leq T$, can be defined just as in the finite dimensional case based on elementary processes and continuous extension, see \cite[Sec. 2.2]{gm11} for details. The following theorem in \cite{gm11}, which is the It\^{o}-isometry in the infinite-dimensional setting, is important and forms the main tool to handle the stochastic integrals in current work.
	\begin{thm}\cite[see Theorem 2.3]{gm11}\label{thm:si}
		The stochastic integral $\Phi \rightarrow \int_0^t \Phi(s) \mathrm{d}\mathcal{W}(s)$ with respect to a $\Y$-valued $Q$-Wiener process $\mathcal{W}(s)$ satisfies
		\begin{align*}
		\E \left\|\int_0^t \Phi(s) \mathrm{d}\mathcal{W}(s)\right\|^2_\X = \E \int_0^t \|\Phi(s)\|^2_{\mathcal{L}_2(\Y_Q,\X)} \mathrm{d}s < \infty
		\end{align*}
		for $t\in [0,T]$.
	\end{thm}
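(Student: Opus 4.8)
The plan is to follow the classical two-step construction of the stochastic integral: first establish the identity for \emph{elementary} (adapted, piecewise-constant) integrands, where everything can be computed by hand, and then extend it to all of $\Lambda_2(\Y_Q,\X)$ by a density-and-continuity argument. The two steps are in fact intertwined, because the integral $\int_0^t \Phi\,\mathrm{d}\mathcal{W}$ is \emph{defined} as the $L^2(\Omega;\X)$-limit of the integrals of approximating elementary processes; the isometry on elementary processes is precisely what makes that definition consistent and produces the integral in the first place.

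First I would take an elementary process
\[
\Phi(s) = \sum_{k=0}^{n-1} \Phi_k \mathbf{1}_{(t_k,t_{k+1}]}(s), \qquad 0 = t_0 < \cdots < t_n = t,
\]
where each $\Phi_k$ is $\mathcal{F}_{t_k}$-measurable with values in $\mathcal{L}_2(\Y_Q,\X)$, and write $\Delta_k\mathcal{W} := \mathcal{W}(t_{k+1}) - \mathcal{W}(t_k)$, so that $\int_0^t \Phi\,\mathrm{d}\mathcal{W} = \sum_{k=0}^{n-1} \Phi_k\,\Delta_k\mathcal{W}$. Expanding the squared $\X$-norm splits into diagonal and off-diagonal terms. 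For $k<l$, the increment $\Delta_l\mathcal{W}$ has mean zero and is independent of $\mathcal{F}_{t_l}$, which dominates $\Phi_k,\Phi_l$ and the earlier increment $\Delta_k\mathcal{W}$; conditioning on $\mathcal{F}_{t_l}$ therefore annihilates all cross terms. For the diagonal terms I would insert the series representation $\Delta_k\mathcal{W} = \sum_j \lambda_j^{1/2}\bigl(\omega_j(t_{k+1})-\omega_j(t_k)\bigr)f_j$, and use that the scalar Brownian increments are independent, mean zero, and satisfy $\E\bigl[(\omega_j(t_{k+1})-\omega_j(t_k))(\omega_i(t_{k+1})-\omega_i(t_k))\bigr] = \delta_{ij}(t_{k+1}-t_k)$, which reduces the conditional expectation to
\[
\E\bigl[\|\Phi_k\,\Delta_k\mathcal{W}\|_\X^2 \mid \mathcal{F}_{t_k}\bigr] = (t_{k+1}-t_k)\sum_j \|\Phi_k(\lambda_j^{1/2} f_j)\|_\X^2 = (t_{k+1}-t_k)\,\|\Phi_k\|_{\mathcal{L}_2(\Y_Q,\X)}^2,
\]
the last equality being exactly the Hilbert-Schmidt norm identity $\|\Phi_k\|_{\mathcal{L}_2(\Y_Q,\X)}^2 = \tr(\Phi_k Q \Phi_k^*)$ recorded above. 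Taking expectations and summing over $k$ gives $\E\|\int_0^t \Phi\,\mathrm{d}\mathcal{W}\|_\X^2 = \E\int_0^t \|\Phi(s)\|_{\mathcal{L}_2(\Y_Q,\X)}^2\,\mathrm{d}s$, which is the claim for elementary integrands.

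Finally, for general $\Phi \in \Lambda_2(\Y_Q,\X)$ I would invoke the density of elementary processes in $\Lambda_2(\Y_Q,\X)$ to choose elementary $\Phi^{(m)} \to \Phi$ in the $\Lambda_2$-norm. The isometry just proved shows that $\bigl(\int_0^t \Phi^{(m)}\,\mathrm{d}\mathcal{W}\bigr)_m$ is Cauchy in $L^2(\Omega;\X)$, so its limit defines $\int_0^t \Phi\,\mathrm{d}\mathcal{W}$, and letting $m\to\infty$ in the identity for $\Phi^{(m)}$ transfers it to $\Phi$; finiteness of the right-hand side is built into the definition of $\Lambda_2(\Y_Q,\X)$. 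I expect the main technical obstacle to be the infinite-dimensional bookkeeping rather than any conceptual step: one must justify interchanging the sum over the eigenbasis $\{f_j\}$ with the expectation and the finite time-sum, which is legitimate by Tonelli's theorem since all summands are non-negative, and one must secure the approximation step in $\Lambda_2(\Y_Q,\X)$. The genuinely infinite-dimensional subtlety is that each $\Phi_k$ is only Hilbert-Schmidt from $\Y_Q$ and need not be bounded on all of $\Y$, so it is exactly the reduction to the trace identity $\|\Phi_k\|_{\mathcal{L}_2(\Y_Q,\X)}^2 = \tr(\Phi_k Q \Phi_k^*)$ that guarantees convergence of the diagonal series and hence the finiteness asserted in the theorem.
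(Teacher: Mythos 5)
The paper offers no proof of this statement; it is quoted verbatim from \cite[Theorem 2.3]{gm11}, and the argument given there is precisely the one you outline: verify the isometry for elementary adapted integrands by killing the cross terms via conditioning and computing the diagonal terms through the eigen-expansion of the $Q$-Wiener increments, then extend to all of $\Lambda_2(\Y_Q,\X)$ by density and $L^2$-continuity. Your proposal is correct and matches that standard construction, including the key reduction $\|\Phi_k\|_{\mathcal{L}_2(\Y_Q,\X)}^2 = \tr(\Phi_k Q \Phi_k^*)$ that secures finiteness.
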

	
	\section{Bounds for mean squared error}\label{se3}
	In this section we present our main results consisting of error bounds for both methods \eqref{eq_KFm} and \eqref{eq_3dvar} on the MSE
	\begin{align}\label{eq_mse}
	\E\|m(t)-u^\dag\|^2, \qquad \E\|\zeta(t)-u^\dag\|^2
	\end{align}
	where $m(t)$ (or $\zeta(t)$) is the posterior mean derived by the non-stationary (or stationary) ARM in Subsection \ref{se14}, respectively. We will derive bounds for both quantities whenever $0 \leq t \leq T$, even though $t = T$ is - in view of \eqref{eq_LInP} - the most interesting case as it contains full data in the whole time interval $[0,T]$. The MSE estimates will be carried out by the classic bias-variance decomposition
	\begin{align}\label{eq_bvdecomp}
	\E\|m(t)-u^\dag\|^2  = \|\E m(t)-u^{\dag}\|^2 + \E \|m(t) - \E m(t)\|^2,
	\end{align}
	and analogously for $\zeta\left(t\right)$. In the right-hand side of above equality \eqref{eq_bvdecomp}, we call $\|\E m(t)-u^{\dag}\|^2 $ be the bias term and $\E \|m(t) - \E m(t)\|^2$ be the variance term.
	
	\subsection{Non-stationary ARM} 
	To bound the MSE, we first derive an explicit formula for the error between the posterior mean and the unknown exact solution. The non-stationary ARM \eqref{eq_KBfilter} has an updating covariance operator $C(t)$ which varies when the time variable $t$ increases. The calculation in Subsection \ref{se14} allows us to write down the covariance operator $C(t)$ and derive the equivalent form (\ref{eq_KFm}) of the posterior mean $m(t)$.
	Noticing the fact that $\mathrm{d}u=0$ and $\mathrm{d}z = Au\mathrm{d}t +\sqrt{\Sigma} \mathrm{d}W$, we could reform (\ref{eq_KFm}) into
	\begin{align}\label{eq_KFmerror}
	\left\{
	\begin{array}{l}
	\mathrm{d}(u-m) = -(C^{-1}_0 + tA^*\Sigma^{-1}A)^{-1} A^*\Sigma^{-1}A (u-m) \mathrm{d}t - (C^{-1}_0 + tA^*\Sigma^{-1}A)^{-1} A^*\Sigma^{-1/2} \mathrm{d}W,  \\
	u(0)-m(0) = u^{\dag}-m_0.
	\end{array}
	\right.
	\end{align}
	Here we denote $u(t)\equiv u^{\dag}$ be the exact solution which is deterministic and stationary with respect to the time variable $t$.
	
	Solving the above initial value problem (\ref{eq_KFmerror}), we obtain the solution $(u -m)(t)$ by
	\begin{align}\label{eq_KFsolution}
	(u-m)(t)  = &  e^{-\int_0^t (C_0^{-1} + s A^* \Sigma^{-1} A)^{-1} A^* \Sigma^{-1} A \mathrm{d}s } (u-m)(0) \nonumber \\
	& \quad - \int_0^t e^{-\int_s^t  (C_0^{-1} + \tau A^* \Sigma^{-1} A)^{-1} A^* \Sigma^{-1} A \mathrm{d}\tau }  (C_0^{-1} + s A^* \Sigma^{-1} A)^{-1} A^* \Sigma^{-1/2} \mathrm{d} W(s).
	\end{align}
	By elementary operator calculations, cf. \cite{B1997}, we find
	\begin{align}\label{eq_expintegral}	
	e^{-\int_s^t  (C_0^{-1} + \tau A^* \Sigma^{-1} A)^{-1} A^* \Sigma^{-1} A \mathrm{d}\tau } = (C_0^{-1} + t A^* \Sigma^{-1} A)^{-1}(C_0^{-1} + s A^* \Sigma^{-1} A).
	\end{align}
	Then we insert the exact solution $u(t) = u^{\dag}$, the initial mean $m(0)=m_0$ and rewrite \eqref{eq_KFsolution} into
	\begin{align}\label{eq_KBm}
	\begin{split}
	u^{\dag} - m(t)& = (C_0^{-1} + t A^* \Sigma^{-1} A)^{-1} C_0^{-1} (u^{\dag}-m_0)  \\
	&\qquad  - \int_0^t e^{-\int_s^t  (C_0^{-1} + \tau A^* \Sigma^{-1} A)^{-1} A^* \Sigma^{-1} A \mathrm{d}\tau }  (C_0^{-1} + s A^* \Sigma^{-1} A)^{-1} A^* \Sigma^{-1/2} \mathrm{d} W(s).
	\end{split}
	\end{align}
	Hence, the bias-variance decomposition involves the two terms
	\begin{subequations}\label{eq_KFdecom}
		\begin{align}
		u^{\dag} - \E m(t) & = (C_0^{-1} + t A^* \Sigma^{-1} A)^{-1} C_0^{-1} (u^{\dag}-m_0),  \label{eq_KFdecoma}\\
		\E m(t) - m(t) &  = - \int_0^t e^{-\int_s^t  (C_0^{-1} + \tau A^* \Sigma^{-1} A)^{-1} A^* \Sigma^{-1} A \mathrm{d}\tau }  (C_0^{-1} + s A^* \Sigma^{-1} A)^{-1} A^* \Sigma^{-1/2} \mathrm{d} W(s),  \label{eq_KFdecomb}
		\end{align}
	\end{subequations}
	and we bound each term separately. Note that \eqref{eq_KFdecomb} consists of an infinite dimensional stochastic integral, which has to be treated with some care. 	
	
	As the bias term is deterministic, we bound it below by standard techniques in regularization theory.
	\begin{prop}\label{prop_1}
		Let Assumptions \ref{assp_1}-\ref{assp_3} hold, then the non-stationary ARM yields error bounds of the bias term
		\begin{enumerate}
			\item If the function $\lambda \mapsto \varphi(\lambda)/\lambda^{p+1}$ is non-increasing, then
			\begin{align*}
			\|\E m(t)-u^{\dag}\|^2 \leq \varphi^2\left(\left(\frac{\alpha}{t}\right)^{\frac{1}{p+1}}\right)\qquad\text{for all}\qquad 0 \leq t \leq T.
			\end{align*}
			\item If there is a constant $c<\infty$ with $\varphi(\lambda) \leq c \lambda^{p+1}$ as $\lambda\rightarrow 0$, then
			\begin{align*}
			\|\E m(t)-u^{\dag}\|^2 \leq c \left(\frac{\alpha}{t}\right)^2 \qquad\text{for all}\qquad 0 \leq t \leq T.
			\end{align*}
		\end{enumerate}
		
	\end{prop}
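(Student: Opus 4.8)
The plan is to reduce the operator expression in \eqref{eq_KFdecoma} to a scalar spectral estimate for Tikhonov regularization. First I would substitute the concrete forms from Assumptions \ref{assp_1}--\ref{assp_2}: since $C_0 = \alpha^{-1}\Omega = \alpha^{-1}(K^*K)^p$ and $A^*\Sigma^{-1}A = (\Sigma^{-1/2}A)^*(\Sigma^{-1/2}A) = K^*K =: H$, the bias operator in \eqref{eq_KFdecoma} becomes $(\alpha H^{-p} + tH)^{-1}\alpha H^{-p}$ acting on $u^{\dag}-m_0$. Because $\Omega$ is a power of $K^*K$ (Item 2 of Assumption \ref{assp_2}), all operators involved commute and are simultaneously diagonalized by the spectral decomposition of $H$, so I may work entirely within the scalar spectral (functional) calculus.

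The key algebraic simplification is the identity $\alpha + tH^{p+1} = H^{p}(\alpha H^{-p}+tH)$, which yields $(\alpha H^{-p}+tH)^{-1}\alpha H^{-p} = \alpha(\alpha + tH^{p+1})^{-1}$. Recalling $B^*B = (K^*K)^{p+1} = H^{p+1}$ from \eqref{eq_Boperator}, this is exactly the Tikhonov residual $r_{1,\epsilon}(B^*B)$ with regularization parameter $\epsilon = \alpha/t$, since $\alpha(\alpha+t\lambda)^{-1} = (\alpha/t)/\bigl((\alpha/t)+\lambda\bigr) = r_{1,\alpha/t}(\lambda)$. Combining this with the source condition $u^{\dag}-m_0 = -\varphi(H)v$, $\|v\|\le 1$ (Assumption \ref{assp_3}), and the standard norm estimate for functional calculus, the bias is controlled by the scalar supremum
\[
\|\E m(t)-u^{\dag}\| \;\le\; \sup_{0<\lambda\le \|B^*B\|} r_{1,\alpha/t}(\lambda)\,\varphi\bigl(\lambda^{1/(p+1)}\bigr),
\]
where I have written $\varphi(H) = \varphi\bigl((B^*B)^{1/(p+1)}\bigr)$ so that the spectral variable $\lambda$ now ranges over the eigenvalues of $B^*B$.

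It remains to estimate this supremum under the two hypotheses. For Item 1, set $\psi(\lambda):=\varphi\bigl(\lambda^{1/(p+1)}\bigr)$; the assumption that $\lambda\mapsto\varphi(\lambda)/\lambda^{p+1}$ is non-increasing is equivalent to $\psi(\lambda)/\lambda$ being non-increasing. I would then run the classical Tikhonov argument by splitting at $\lambda=\epsilon=\alpha/t$: for $\lambda\le\epsilon$ use $r_{1,\epsilon}(\lambda)\le 1$ together with monotonicity of $\psi$, and for $\lambda>\epsilon$ use $r_{1,\epsilon}(\lambda)\le \epsilon/\lambda$ together with $\psi(\lambda)\le(\lambda/\epsilon)\psi(\epsilon)$. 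Both cases give $r_{1,\epsilon}(\lambda)\psi(\lambda)\le\psi(\epsilon)=\varphi\bigl((\alpha/t)^{1/(p+1)}\bigr)$, and squaring yields the claim. For Item 2, the hypothesis $\varphi(\lambda)\le c\lambda^{p+1}$ translates into $\psi(\lambda)\le c\lambda$, so the source lies exactly at the qualification $\nu=1$ of Tikhonov regularization; applying \eqref{eq_resi_error} with $\nu=1$ gives $r_{1,\epsilon}(\lambda)\lambda\le\epsilon$, hence $r_{1,\epsilon}(\lambda)\psi(\lambda)\le c\epsilon=c\,\alpha/t$, and squaring yields the stated rate up to relabeling the constant.

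I expect the main obstacle to be the careful bookkeeping in the algebraic reduction — in particular justifying the cancellation $(\alpha H^{-p}+tH)^{-1}\alpha H^{-p}=\alpha(\alpha+tH^{p+1})^{-1}$ despite the unboundedness of $H^{-p}$, which is legitimate because the resulting operator is bounded and all factors commute — and in correctly matching the monotonicity hypothesis on $\varphi(\lambda)/\lambda^{p+1}$ to the reparametrized condition on $\psi(\lambda)/\lambda$ required for the Tikhonov supremum bound.
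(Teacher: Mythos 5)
Your proposal is correct and follows essentially the same route as the paper: both reduce the bias operator $(C_0^{-1}+tA^*\Sigma^{-1}A)^{-1}C_0^{-1}$ via the commutation afforded by $\Omega=(K^*K)^p$ to the Tikhonov residual $r_{1,\alpha/t}(B^*B)$ acting on $\varphi(K^*K)v$, and then bound the resulting scalar supremum. The only difference is that the paper delegates the final spectral estimate to a citation of \cite[Lemma 3.1]{LLM} and \eqref{eq_resi_error}, whereas you carry out the standard splitting at $\lambda=\alpha/t$ explicitly; your reparametrization of the monotonicity hypothesis and the qualification-$1$ argument for Item 2 are both accurate.
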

	\begin{proof}
		Using Assumptions \ref{assp_1} and \ref{assp_2} we rewrite (\ref{eq_KFdecoma}) into
		\begin{align*}
		u^{\dag} - \E m(t) & = \Omega^{1/2} \frac{\alpha}{t} \left(\frac{\alpha}{t} + B^* B\right)^{-1} \Omega^{-1/2}(u^{\dag}-m_0) \nonumber \\
		& = \frac{\alpha}{t} \left(\frac{\alpha}{t} + B^* B\right)^{-1}(u^{\dag}-m_0)
		\end{align*}
		where the latter equality follows after the commuting property between $\Omega^{1/2}$ and $B^*B$ under Assumption \ref{assp_2}.
		We thus obtain, by using Assumption \ref{assp_3},
		\begin{align*}
		\|\E m(t)-u^{\dag}\|^2 = \|r_{1,\frac{\alpha}{t}}(B^*B) \varphi(K^*K)\|^2.
		\end{align*}
		A direct call of \cite[Lemma 3.1]{LLM} or implementation of (\ref{eq_resi_error}) then yields the results by viewing $\alpha/t$ as the regularization parameter.
	\end{proof}
	
	To treat the variance term, we need to investigate the stochastic integral in \eqref{eq_KFdecomb} carefully and provide its bound below.
	
	\begin{prop}\label{prop_2}
		Let Assumptions \ref{assp_1}-\ref{assp_2} hold, then the non-stationary ARM yields a bound of the variance term
		\begin{align*}
		\E\|\E m(t) - m(t)\|^2 & \leq  \min \left\{\alpha^{-1} \tr \left(\Omega\right), \alpha^{-\frac{1}{p+1}} t^{-\frac{p}{p+1}} \mathcal{N}\left(\frac{\alpha}{t}\right) \right\}\qquad\text{for all}\qquad 0 \leq t \leq T.
		\end{align*}
	\end{prop}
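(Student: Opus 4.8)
The plan is to evaluate the infinite-dimensional stochastic integral in \eqref{eq_KFdecomb} exactly by means of the It\^o isometry (Theorem \ref{thm:si}), to reduce the resulting trace to a scalar spectral sum using the commuting functional calculus of $K^*K$, and finally to establish the two competing estimates by elementary termwise bounds, of which one then takes the minimum.

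First I would exploit the simplification hidden in \eqref{eq_expintegral}. Writing $M_s := (C_0^{-1}+sA^*\Sigma^{-1}A)^{-1}$, identity \eqref{eq_expintegral} says that the transition factor equals $M_tM_s^{-1}$, so the integrand in \eqref{eq_KFdecomb} collapses to
\begin{align*}
e^{-\int_s^t(C_0^{-1}+\tau A^*\Sigma^{-1}A)^{-1}A^*\Sigma^{-1}A\,\mathrm{d}\tau}\,M_sA^*\Sigma^{-1/2} = M_tM_s^{-1}M_sA^*\Sigma^{-1/2} = M_tA^*\Sigma^{-1/2},
\end{align*}
which is deterministic and independent of $s$. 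Since $K=\Sigma^{-1/2}A$ and hence $K^*=A^*\Sigma^{-1/2}$, this integrand is simply $-M_tK^*$, and its $\mathcal L_2(\Y,\X)$-norm is finite because $\Omega=(K^*K)^p$ is trace class. Applying Theorem \ref{thm:si} (here with the standard Wiener process) then gives
\begin{align*}
\E\|\E m(t)-m(t)\|^2 = \int_0^t\|M_tK^*\|^2_{\mathcal L_2(\Y,\X)}\,\mathrm{d}s = t\,\tr\left(M_tA^*\Sigma^{-1}AM_t\right) = t\,\tr\left(M_tK^*KM_t\right).
\end{align*}

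Next I would diagonalize. Under Assumptions \ref{assp_1}--\ref{assp_2} all operators involved are functions of $K^*K$ and hence commute; using $C_0=\alpha^{-1}\Omega$ together with $\Omega K^*K = B^*B$ from \eqref{eq_Boperator} one obtains $M_t=\Omega(\alpha I+tB^*B)^{-1}$, so the spectral calculus of $K^*K$ (eigenvalues $\lambda_j$) turns the trace into
\begin{align*}
\E\|\E m(t)-m(t)\|^2 = t\sum_j\frac{\lambda_j^{2p+1}}{(\alpha+t\lambda_j^{p+1})^2}.
\end{align*}
Splitting $\lambda_j^{2p+1}=\lambda_j^{p}\cdot\lambda_j^{p+1}$ and using the elementary bound $\tfrac{x}{(\alpha+x)^2}\le\tfrac1{4\alpha}\le\tfrac1\alpha$ with $x=t\lambda_j^{p+1}$ yields the first estimate $\sum_j\lambda_j^{p}/\alpha = \alpha^{-1}\tr(\Omega)$. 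For the second estimate I would instead write each summand as $\frac{\lambda_j^{p}}{\alpha+t\lambda_j^{p+1}}\cdot\frac{t\lambda_j^{p+1}}{\alpha+t\lambda_j^{p+1}}$, factor out the supremum of the first fraction, and recognize the remaining sum as the effective dimension $\mathcal N(\alpha/t)$ from \eqref{eq_EffDim}.

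The one genuinely quantitative step is the bound on $\sup_{\lambda>0}\lambda^{p}/(\alpha+t\lambda^{p+1})$. A short optimization (the maximizer is $\lambda^{p+1}=p\alpha/t$) gives the value $\frac{p^{p/(p+1)}}{p+1}\,\alpha^{-1/(p+1)}t^{-p/(p+1)}$, and since $p^{p/(p+1)}\le p+1$ for every $p>0$ the prefactor is at most one, delivering exactly $\alpha^{-1/(p+1)}t^{-p/(p+1)}\mathcal N(\alpha/t)$. Taking the minimum of the two estimates completes the proof. I expect the main obstacle to be the careful justification of the infinite-dimensional stochastic integral, namely verifying Hilbert--Schmidt integrability and correctly matching the covariance structure in Theorem \ref{thm:si} so that the It\^o isometry produces the trace $\tr(M_tK^*KM_t)$; once the integrand has been reduced to the $s$-independent operator $-M_tK^*$, the remaining spectral inequalities are routine.
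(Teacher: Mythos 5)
Your proposal follows essentially the same route as the paper: both use \eqref{eq_expintegral} to collapse the integrand of \eqref{eq_KFdecomb} to an $s$-independent operator, apply the It\^{o} isometry to obtain the exact variance $\frac1t\tr\left(\Omega\left(\frac{\alpha}{t}+B^*B\right)^{-2}B^*B\right)$ (which coincides with your spectral sum $t\sum_j\lambda_j^{2p+1}(\alpha+t\lambda_j^{p+1})^{-2}$), and then derive the two competing bounds by exactly the two splittings you describe; your scalar estimates, including the optimization yielding the prefactor $p^{p/(p+1)}/(p+1)\le1$, are correct and even improve the first bound by a factor $1/4$. The one point that needs repair is your application of Theorem \ref{thm:si} ``with the standard Wiener process'': as stated, that theorem concerns a $Q$-Wiener process with \emph{trace-class} $Q$, whereas $W$ in \eqref{eq_consysb} has covariance $I$, which is not trace class on an infinite-dimensional $\Y$, so $W$ is only a cylindrical Wiener process and the theorem does not literally apply. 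The paper circumvents this by inserting $(BB^*)^{-\frac{p}{2(p+1)}}(BB^*)^{\frac{p}{2(p+1)}}$ and integrating against the $Q$-Wiener process with $Q=(BB^*)^{\frac{p}{p+1}}$, which is trace class by Assumption \ref{assp_2} (cf. Remark \ref{rem_1}); alternatively one may invoke the cylindrical It\^{o} isometry, for which your observation that the integrand $M_tK^*$ (with $M_t=(C_0^{-1}+tA^*\Sigma^{-1}A)^{-1}$) satisfies $\left\Vert M_tK^*\right\Vert_{\mathcal{L}_2(\Y,\X)}^2=\tr\left(M_tK^*KM_t\right)\le\alpha^{-2}\tr(\Omega)<\infty$ is precisely the required hypothesis. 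With that step made explicit, your argument is complete and delivers the stated bound.
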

	\begin{proof}
		We first rewrite
		\begin{align*}
		& \E m(t) - m(t) \\
		& \quad  = - \int_0^t e^{-\int_s^t  (C_0^{-1} + \tau A^* \Sigma^{-1} A)^{-1} A^* \Sigma^{-1} A \mathrm{d}\tau }  (C_0^{-1} + s A^* \Sigma^{-1} A)^{-1} A^* \Sigma^{-1/2} (B B^*)^{-\frac{p}{2(p+1)}}(B B^*)^{\frac{p}{2(p+1)}} \mathrm{d} W(s) .
		\end{align*}
		In particular, we denote $\mathrm{d} \mathcal{W}(s): = (B B^*)^{\frac{p}{2(p+1)}} \mathrm{d} W(s)$ where $ \mathcal{W}(s)$ is a $Q$-Wiener process for $Q := (B B^*)^{\frac{p}{p+1}}$. As $B^*B$ and $B B^*$ have the same eigenvalues\footnote{If $u$ is an eigenfunction for $B^*B$ with eigenvalue $\lambda$, then $B B^*B u = B \left(B^*B\right) u = B \lambda u = \lambda Bu$, i.e. $Bu$ is an eigenfunction of $BB^*$ with the same eigenvalue and vice versa.}, the operator $Q$ is a positive definite self-adjoint trace class operator according to Assumption \ref{assp_1} and Item 2 of Assumption \ref{assp_2}.
		
		Meanwhile we define the following process
		\begin{align*}
		\Pi(s) : =   e^{-\int_s^t  (C_0^{-1} + \tau A^* \Sigma^{-1} A)^{-1} A^* \Sigma^{-1} A \mathrm{d}\tau }  (C_0^{-1} + s A^* \Sigma^{-1} A)^{-1} A^* \Sigma^{-1/2} B^* (B B^*)^{-\frac{p}{2(p+1)}}
		\end{align*}
		and will verify that the square of the Hilbert-Schmidt norm $ \|\Pi(s)\|^2_{\mathcal{L}_2(\Y_Q,\X)} = \tr \left(\Pi Q \Pi^*\right)$ is bounded. To this end, we recall \eqref{eq_expintegral}, Assumptions \ref{assp_1}, \ref{assp_2}  and rewrite the process by
		\begin{align*}
		\Pi(s) & = (C_0^{-1} + t A^* \Sigma^{-1} A)^{-1}   A^* \Sigma^{-1/2} (B B^*)^{-\frac{p}{2(p+1)}} \\
		& = \frac{1}{t} \Omega^{1/2}\left(\frac{\alpha}{t} + B^* B\right)^{-1} B^* (B B^*)^{-\frac{p}{2(p+1)}}.
		\end{align*}
		One then can prove that $\Pi(s)$ is a bounded process with respect to the variable $s$ for any fixed $\alpha, t \in (0,\infty)$ noticing that
		\begin{align*}
		\sup_{\lambda\in (0,\|B^*B\|]} \left|\frac{1}{t} \lambda^{\frac{p}{2(p+1)}}\left(\frac{\alpha}{t} + \lambda\right)^{-1}  \lambda^{-\frac{p}{2(p+1)}} \lambda^{1/2} \right| \leq \frac{1}{\sqrt{\alpha t}}.
		\end{align*}
		From Theorem \ref{thm:si} we verify that
		\begin{align*}
		\E\|\E m(t) - m(t)\|^2 = \E \left\|\int_0^t \Pi(s) \mathrm{d} \mathcal{W}(s) \right\|^2 = \E \int_0^t \|\Pi(s)\|^2_{\mathcal{L}_2(\Y_Q,\X)}   \mathrm{d}s < \infty.
		\end{align*}
		
		By using Assumptions \ref{assp_1}-\ref{assp_2} again, we end the proof by further implementing the cyclic property of the trace operator and (\ref{eq_resi_error}) to derive
		\begin{align*}
		\E\|\E m(t) - m(t)\|^2 & = \frac{1}{t}\tr\left(\Omega \left(\frac{\alpha}{t} + B^* B\right)^{-2} B^*B\right)  \nonumber \\
		& \leq \frac{1}{\alpha^2} t \left\| \frac{\alpha}{t} \left(\frac{\alpha}{t} + B^* B\right)^{-1}  \sqrt{B^* B}\right\|^2 \tr \left(\Omega\right) \\
		& \leq \alpha^{-1} \tr \left(\Omega\right)
		\end{align*}
		or
		\begin{align*}
		\E\|\E m(t) - m(t)\|^2 & = \frac{1}{t}\tr\left(\Omega \left(\frac{\alpha}{t} + B^* B\right)^{-2} B^*B\right)  \nonumber \\
		& \leq \frac{1}{t} \left\|\Omega \left(\frac{\alpha}{t} + B^* B\right)^{-1}\right\| \tr \left(\left(\frac{\alpha}{t} + B^* B\right)^{-1} B^*B\right) \\
		& \leq \frac{1}{\alpha} \sup_{\lambda\in (0,\|B^*B\|}\left|\lambda^{\frac{p}{p+1}} r_{1,\frac{\alpha}{t}}(\lambda)\right| \mathcal{N}\left(\frac{\alpha}{t}\right), \\
		& = \alpha^{-\frac{1}{p+1}} t^{-\frac{p}{p+1}} \mathcal{N}\left(\frac{\alpha}{t}\right)
		\end{align*}
		where the term $\tr \left(\left(\frac{\alpha}{t} + B^* B\right)^{-1} B^*B\right) $ is the effective dimension $\mathcal{N}\left(\frac{\alpha}{t}\right)$ of $B^*B$.
	\end{proof}
	
	\begin{rem}\label{rem_1}
		Note, that introducing $\mathrm{d} \mathcal{W}(t): = (B B^*)^{\frac{p}{2(p+1)}} \mathrm{d} W(t)$ can be considered as a pre-smoothing step, which transforms the white noise $W$ into $\Y$-valued noise $\mathcal W$. However, we do this only for the analysis of the variance term here (which avoids replacing the whole problem \eqref{eq_LInP} by a smoother but more ill-posed one), and this is furthermore only possible because we assumed the initial covariance $\Omega$ to be of trace class.
	\end{rem}
	
	\begin{rem}\label{rem_2}
		We shall mention that in both finite and infinite-dimensional settings, the variance term is asymptotically decaying when the time variable $t$ becomes large. Extended discussion is provided here.
		\begin{description}
			\item[Case 1. Finite dimensional setting:] Note that
			\begin{align*}
			\E\|\E m(t) - m(t)\|^2 & = \frac{1}{t}\tr\left(\Omega \left(\frac{\alpha}{t} + B^* B\right)^{-2} B^*B\right)  \nonumber \\
			& \leq \frac{1}{\alpha^2} t \left\| \frac{\alpha}{t} \left(\frac{\alpha}{t} + B^* B\right)^{-1}  \sqrt{B^* B \Omega}\right\|^2 \tr \left(I\right) \\
			& \leq d \alpha^{-\frac{1}{p+1}} t^{-\frac{p}{p+1}}
			\end{align*}
			where $d$ is the dimensionality of the state variable $u^{\dag}$ as well as the upper bound of $\mathcal{N}\left(\frac{\alpha}{t}\right)$.
			\item[Case 2. Moderately ill-posed operator:] Assume that the singular value $s_j^2$ of $B^* B$ decays in a polynomial manner, i.e. $s_j^2 \asymp j^{-2\theta}$ for some $\theta>0$, then (\ref{eq_Boperator}) and Item 2 of Assumption \ref{assp_2} yields
			\begin{align*}
			2\theta \frac{p}{p+1}> 1.
			\end{align*}
			At the same time, we recall the asymptotical behavior of the effective dimension in \cite[Page. 901]{LLM} such that $\mathcal{N}\left(\frac{\alpha}{t}\right) \asymp \left(\frac{\alpha}{t}\right)^{-\frac{1}{2\theta}}$ . Then we obtain
			\begin{align*}
			\E\|\E m(t) - m(t)\|^2 & \leq \alpha^{-\frac{1}{p+1}} t^{-\frac{p}{p+1}} \mathcal{N}\left(\frac{\alpha}{t}\right) \\
			& \asymp \alpha^{-\frac{1}{p+1}-\frac{1}{2\theta}} t^{-\frac{p}{p+1} + \frac{1}{2\theta}}.
			\end{align*}
			Noticing that $\frac{p}{p+1} - \frac{1}{2\theta} >0$, we obtain an asymptotically decaying variance for any fixed $\alpha>0$.
			\item[Case 3. Severely ill-posed operator] On the other hand, assume that the singular value $s_j^2$ of $B^* B$ decays in an exponential manner, i.e. $s_j^2 \asymp \exp(-2cj)$ for some $c>0$. Then $\mathcal{N}\left(\frac{\alpha}{t}\right) \asymp \frac{1}{2c} \log\left(\frac{t}{\alpha}\right)$ yields an asymptotically decaying variance for any fixed $\alpha>0$.
		\end{description}
	\end{rem}
	
	We summarize both bias and variance bounds and derive the asymptotic behavior of the non-stationary ARM below.
	\begin{thm}\label{thm_1}
		Let Assumptions \ref{assp_1}-\ref{assp_3} hold, then the non-stationary ARM yields MSE estimates
		\begin{enumerate}
			\item If the function $\lambda \mapsto \varphi(\lambda)/\lambda^{p+1}$ is non-increasing, then
			\begin{align*}
			\E\| m(t)-u^{\dag}\|^2 \leq \varphi^2\left(\left(\frac{\alpha}{t}\right)^{\frac{1}{p+1}}\right) + \alpha^{-\frac{1}{p+1}} t^{-\frac{p}{p+1}} \mathcal{N}\left(\frac{\alpha}{t}\right)
			\end{align*}
			for all $0 \leq t \leq T$.
			\item If there is a constant $c<\infty$ with $\varphi(\lambda) \leq c \lambda^{p+1}$ as $\lambda\rightarrow 0$, then
			\begin{align*}
			\E\| m(t)-u^{\dag}\|^2 \leq c \left( \frac{\alpha}{t}\right)^2 + \alpha^{-\frac{1}{p+1}} t^{-\frac{p}{p+1}} \mathcal{N}\left(\frac{\alpha}{t}\right) \qquad\text{for all}\qquad 0 \leq t \leq T.
			\end{align*}
		\end{enumerate}
	\end{thm}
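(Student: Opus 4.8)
The plan is to assemble Theorem~\ref{thm_1} from the two building blocks already established, namely the bias bound of Proposition~\ref{prop_1} and the variance bound of Proposition~\ref{prop_2}, by invoking the classical bias-variance decomposition \eqref{eq_bvdecomp}. All of the substantive work — controlling the deterministic bias through the residual function of Tikhonov regularization, and estimating the infinite-dimensional stochastic integral via the It\^o isometry of Theorem~\ref{thm:si} — has already been carried out in those two propositions, so the present statement is essentially a matter of bookkeeping rather than of new analysis.

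Concretely, I would start from the identity
\[
\E\|m(t)-u^\dag\|^2 = \|\E m(t)-u^{\dag}\|^2 + \E \|m(t) - \E m(t)\|^2,
\]
which is exactly \eqref{eq_bvdecomp} and holds because $\E m(t)$ is deterministic and the cross term vanishes. The two summands are precisely the bias and variance terms that were treated separately above, so it remains only to insert the corresponding bounds.

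For the bias term I would invoke Proposition~\ref{prop_1}, whose two cases — the monotonicity condition on $\lambda \mapsto \varphi(\lambda)/\lambda^{p+1}$, and the polynomial majorization $\varphi(\lambda) \le c\lambda^{p+1}$ as $\lambda \to 0$ — match the two cases of the theorem verbatim, supplying the bounds $\varphi^2((\alpha/t)^{1/(p+1)})$ and $c(\alpha/t)^2$ respectively. For the variance term I would invoke Proposition~\ref{prop_2} and retain the second entry of the minimum, namely $\alpha^{-1/(p+1)} t^{-p/(p+1)} \mathcal{N}(\alpha/t)$. Adding the bias bound to this variance bound in each of the two cases then yields the two displayed estimates, uniformly for $0 \le t \le T$.

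There is no genuine obstacle here; the only real decision is which branch of the variance bound in Proposition~\ref{prop_2} to carry forward. I would select the effective-dimension bound $\alpha^{-1/(p+1)} t^{-p/(p+1)} \mathcal{N}(\alpha/t)$ rather than the uniform bound $\alpha^{-1}\tr(\Omega)$, since — as detailed in Remark~\ref{rem_2} — it exhibits the correct decay in the ending time $t$ and is therefore the form needed to read off convergence rates and, in the subsequent analysis, to optimize the tuning parameter $\alpha$ against $T$.
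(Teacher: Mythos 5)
Your proposal is correct and coincides with the paper's own (implicit) argument: Theorem \ref{thm_1} is stated immediately after the phrase ``We summarize both bias and variance bounds,'' i.e.\ it is exactly the bias--variance decomposition \eqref{eq_bvdecomp} combined with Proposition \ref{prop_1} for the bias and the effective-dimension branch of the minimum in Proposition \ref{prop_2} for the variance. Nothing is missing.
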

	
	We provide some discussion concerning the above error bounds. In view of Remark \ref{rem_2}, the bound $\alpha^{-\frac{1}{p+1}} t^{-\frac{p}{p+1}} \mathcal{N}\left(\frac{\alpha}{t}\right) $ in Proposition \ref{prop_2} decays faster than the constant bound $\alpha^{-1} \tr \left(\Omega\right)$ if any constant $\alpha$ is fixed. Meanwhile, to obtain a better estimate, we can tune the parameter $\alpha$ with respect to the time variable $t$ which balances both bias and variance. For instance, if the ending time is fixed by $T$, we let
	\begin{align}\label{eq_cor_theta}
	\Theta_{\varphi}\left(\epsilon\right) := \epsilon \varphi \left(\epsilon\right)/ \sqrt{\epsilon \mathcal{N}\left(\epsilon^{p+1}\right)}
	\end{align}
	with $\epsilon := \left(\frac{\alpha}{T}\right)^{\frac{1}{p+1}}$. Then by choosing $\alpha^{\Theta}_*$ where $\epsilon^{\Theta}_*=\left(\frac{\alpha^{\Theta}_*}{T}\right)^{\frac{1}{p+1}}$ is a solution to the equation
	\begin{align}\label{eq_cor_alphastar}
	\Theta_{\varphi}\left(\epsilon^{\Theta}_*\right) = \sqrt{\frac{1}{T}}
	\end{align}
	we obtain the following corollary concerning the a priori parameter choice rule $\alpha^{\Theta}_*$.
	
	\begin{cor}\label{cor_NSARM}
		Suppose $\varphi(\lambda) \prec \lambda^{p+1}$. Let Assumptions \ref{assp_1}-\ref{assp_3} hold and $T$ be the ending time of the non-stationary ARM, if we choose the a priori parameter choice $\alpha^{\Theta}_*$ satisfying (\ref{eq_cor_alphastar}), then
		\begin{align*}
		\E\| m(T)-u^{\dag}\|^2 \leq c \varphi^2\left(\Theta_{\varphi}^{-1}\left(\sqrt{\frac{1}{T}}\right)\right), \quad \textrm{as}\quad T\rightarrow \infty.
		\end{align*}
	\end{cor}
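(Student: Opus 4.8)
The plan is to obtain the corollary directly from the combined bias–variance estimate of Theorem \ref{thm_1} by evaluating it at the terminal time $t = T$ and then inserting the balancing parameter choice \eqref{eq_cor_alphastar}. The hypothesis $\varphi(\lambda)\prec\lambda^{p+1}$ is precisely the statement that $\lambda\mapsto\varphi(\lambda)/\lambda^{p+1}$ is non-increasing, so we are in the first (non-saturated) case of Theorem \ref{thm_1} and may use the bias bound $\varphi^2(\epsilon)$ rather than the saturated bound of the second case. This gives
\begin{align*}
\E\|m(T) - u^\dag\|^2 \leq \varphi^2\left(\left(\frac{\alpha}{T}\right)^{\frac{1}{p+1}}\right) + \alpha^{-\frac{1}{p+1}} T^{-\frac{p}{p+1}} \mathcal{N}\left(\frac{\alpha}{T}\right).
\end{align*}
First I would pass to the reduced variable $\epsilon := (\alpha/T)^{1/(p+1)}$ from \eqref{eq_cor_theta}, so that $\alpha = T\epsilon^{p+1}$ and hence $\alpha^{-1/(p+1)} = T^{-1/(p+1)}\epsilon^{-1}$. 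Substituting this into the variance term contracts the bound to
\begin{align*}
\E\|m(T) - u^\dag\|^2 \leq \varphi^2(\epsilon) + \frac{1}{T}\,\frac{\mathcal{N}(\epsilon^{p+1})}{\epsilon}.
\end{align*}

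The key observation is that $\Theta_\varphi$ from \eqref{eq_cor_theta} encodes exactly the balance between these two summands. Squaring its definition gives
\begin{align*}
\Theta_\varphi(\epsilon)^2 = \frac{\epsilon^2 \varphi^2(\epsilon)}{\epsilon\,\mathcal{N}(\epsilon^{p+1})} = \frac{\epsilon\,\varphi^2(\epsilon)}{\mathcal{N}(\epsilon^{p+1})},
\end{align*}
so the defining equation \eqref{eq_cor_alphastar}, i.e. $\Theta_\varphi(\epsilon_*^\Theta)^2 = 1/T$, is nothing but the identity
\begin{align*}
\varphi^2(\epsilon_*^\Theta) = \frac{1}{T}\,\frac{\mathcal{N}((\epsilon_*^\Theta)^{p+1})}{\epsilon_*^\Theta},
\end{align*}
which states that at $\epsilon = \epsilon_*^\Theta$ the variance contribution equals the squared bias. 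Inserting $\alpha = \alpha_*^\Theta$ into the reduced bound therefore collapses its right-hand side to twice the bias term, and recalling $\epsilon_*^\Theta = \Theta_\varphi^{-1}(\sqrt{1/T})$ yields
\begin{align*}
\E\|m(T) - u^\dag\|^2 \leq 2\,\varphi^2\left(\Theta_\varphi^{-1}\left(\sqrt{\tfrac{1}{T}}\right)\right),
\end{align*}
which is the claimed estimate with $c = 2$.

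Two points require care and are where the hypotheses genuinely enter. The first, already noted, is the justification that $\varphi(\lambda)\prec\lambda^{p+1}$ places us in the non-saturated regime of Theorem \ref{thm_1}. The second is the well-posedness of the parameter choice: I would need $\Theta_\varphi$ to be invertible on the relevant range and $\epsilon_*^\Theta\to 0$ as $T\to\infty$. This follows since $\epsilon\mapsto\epsilon\varphi^2(\epsilon)$ is increasing (as $\varphi$ is an index function) while $\epsilon\mapsto\mathcal{N}(\epsilon^{p+1})$ is non-increasing, because the effective dimension \eqref{eq_EffDim} decreases as its argument grows; hence $\Theta_\varphi$ is strictly increasing with $\Theta_\varphi(\epsilon)\to 0$ as $\epsilon\to 0$, so for every sufficiently large $T$ a unique, vanishingly small solution $\epsilon_*^\Theta$ of \eqref{eq_cor_alphastar} exists. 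I expect this monotonicity/invertibility together with the asymptotic smallness of $\epsilon_*^\Theta$ to be the only delicate point; the remaining work is the bookkeeping of the substitution $\epsilon=(\alpha/T)^{1/(p+1)}$ and the algebraic identification of \eqref{eq_cor_alphastar} with the bias–variance balance.
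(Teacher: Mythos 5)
Your proposal is correct and follows exactly the route the paper intends: the corollary is a direct consequence of the first case of Theorem \ref{thm_1} evaluated at $t=T$, with the substitution $\epsilon=(\alpha/T)^{1/(p+1)}$ turning the variance term into $\mathcal{N}(\epsilon^{p+1})/(T\epsilon)$ and the choice \eqref{eq_cor_alphastar} equating it with the squared bias, giving the bound with $c=2$. Your additional remarks on the monotonicity and invertibility of $\Theta_\varphi$ match the paper's own later observation that $\Theta_\varphi$ is a non-decreasing continuous function vanishing at $0$, so there is nothing missing.
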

	
	\begin{rem}\label{rem_varphichoice_NSARM}
		In particular, let $\varphi(\lambda) =\lambda^{p+1}$ and assume that the singular value $s_j^2$ of $B^* B$ decays in a polynomial manner, i.e. $s_j^2 \asymp j^{-2\theta}$ for some $\theta>0$ which yields $\mathcal{N}\left(\frac{\alpha}{t}\right) \asymp \left(\frac{\alpha}{t}\right)^{-\frac{1}{2\theta}}$ . Then we obtain
		\begin{align}\label{eq_rem_varphichoice_NSARM}
		\E\| m(T)-u^{\dag}\|^2 & \leq c \left( \frac{\alpha}{T}\right)^2 + \alpha^{-\frac{1}{p+1}} T^{-\frac{p}{p+1}} \mathcal{N}\left(\frac{\alpha}{T}\right) \nonumber \\
		& \leq c T^{-\frac{2}{2+\frac{1}{2\theta}+\frac{1}{p+1}}}
		\end{align}
		by choosing $\alpha^{\Theta}_*= T^{\frac{2+\frac{1}{2\theta}-\frac{p}{p+1}}{2+\frac{1}{2\theta}+\frac{1}{p+1}}}$. As Item 2 of Assumption \ref{assp_2} shows $\frac{p}{p+1} > \frac{1}{2\theta}$, we thus obtain
		\begin{align*}
		T^{-\frac{2}{2+\frac{1}{2\theta}+\frac{1}{p+1}}} \sim o(T^{-2/3}),\quad \textrm{as} \quad T\rightarrow \infty
		\end{align*}
		which is a clear advantage when we use the effective dimension $\mathcal{N}\left(\frac{\alpha}{t}\right)$ in bounding the variance term.
		
		At the same time, the error estimate in (\ref{eq_rem_varphichoice_NSARM}) is the saturation of the non-stationary ARM such that one can not improve the rate by assuming a higher smoothness index function $\varphi$.
	\end{rem}
	
	In the end of current subsection, we compare the non-stationary ARM with the Bayesian approach. As has been proven in \cite{ILLS2017}, the discrete Kalman filter is equivalent to the Bayesian approach where the same optimal error estimate can be obtained under appropriate assumptions. Meanwhile, in current subsection, we also verify that the non-stationary ARM, as a continuous analogue of the Kalman filter, is equivalent to the Bayesian approach as investigated in \cite{LLM} if we let $\delta$ in (\ref{eq_LInP}) obey $\delta =1/\sqrt{T}$ as heuristically discussed in the end of Subsection \ref{se13}.
	
	\subsection{Stationary ARM}
	To obtain the MSE estimate of the stationary ARM \eqref{eq_3dvar}, similar to \eqref{eq_KFmerror}, we derive the error between the posterior mean and the unknown exact solution below
	\begin{align}\label{eq_3dvarerror}
	\left\{
	\begin{array}{l}
	\mathrm{d}(u-\zeta) = - C_0 A^*\Sigma^{-1}A (u-\zeta) dt -  C_0 A^*\Sigma^{-1/2} \mathrm{d}W,  \\
	u(0)-\zeta(0) = u^{\dag}-m_0.
	\end{array}
	\right.
	\end{align}
	We thus calculate the solution of above initial value problem (\ref{eq_3dvarerror}) by
	\begin{align*}
	(u-\zeta)(t) = e^{-C_0 A^* \Sigma^{-1} A t} (u-\zeta)(0) - \int_0^t e^{-\int_s^t C_0 A^* \Sigma^{-1} A \mathrm{d}\tau } C_0 A^* \Sigma^{-1/2} \mathrm{d}W(s).
	\end{align*}
	The bias-variance decomposition then allows us to derive the MSE estimate
	\begin{align*}
	\E\|\zeta(t)-u^{\dag}\|^2 = \|u^{\dag} - \E\zeta(t)\|^2 + \E \|\E\zeta(t)-\zeta(t)\|^2
	\end{align*}
	where
	\begin{align*}
	u^{\dag} - \E\zeta(t)  & : = e^{-C_0 A^* \Sigma^{-1} A t} (u-\zeta)(0), \\
	\E\zeta(t)-\zeta(t) & : = - \int_0^t e^{-\int_s^t C_0 A^* \Sigma^{-1} A \mathrm{d}\tau } C_0 A^* \Sigma^{-1/2} \mathrm{d}W(s).
	\end{align*}
	Similar to the previous subsection, we bound both terms separately.
	
	\begin{prop}\label{prop_3}
		Let Assumptions \ref{assp_1}-\ref{assp_3} hold, then the stationary ARM yields an error bound of the bias term
		\begin{align*}
		\|\E \zeta (t) - u^{\dag}\|^2 \leq c \varphi^2\left(\left(\frac{\alpha}{t}\right)^{\frac{1}{p+1}}\right)\qquad\text{for all}\qquad 0 \leq t \leq T
		\end{align*}
		with the constant $c = \max\{(\nu_0/(p+1))^{\nu_0/(p+1)},1\}$ and $\nu_0$ is the qualification index.
	\end{prop}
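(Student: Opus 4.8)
The plan is to recognize the bias operator $e^{-C_0 A^* \Sigma^{-1} A t}$ as the Showalter residual function applied to $B^*B$, and then to reduce the resulting operator-norm estimate to a scalar supremum over the spectrum of $K^*K$ that is controlled by the qualification of Showalter regularization.

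First I would simplify the generator of the exponential. Using $A^* \Sigma^{-1} A = K^*K$, Assumption~\ref{assp_1} (so $C_0 = \alpha^{-1}\Omega$), Item~2 of Assumption~\ref{assp_2} ($\Omega = (K^*K)^p$) and the identities \eqref{eq_Boperator}, the generator becomes $C_0 A^* \Sigma^{-1} A = \alpha^{-1}(K^*K)^{p+1} = \alpha^{-1} B^*B$, whence
\[
e^{-C_0 A^* \Sigma^{-1} A t} = e^{-\frac{t}{\alpha} B^*B} = r_{2,\alpha/t}(B^*B),
\]
i.e. exactly the Showalter residual with regularization parameter $\epsilon = \alpha/t$. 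Inserting the source condition of Assumption~\ref{assp_3}, written as $u^\dag - m_0 = -\varphi(K^*K)v$ with $\|v\|\le 1$, gives $u^\dag - \E\zeta(t) = -r_{2,\alpha/t}(B^*B)\varphi(K^*K)v$ and therefore $\|\E\zeta(t)-u^\dag\| \le \|r_{2,\alpha/t}(B^*B)\varphi(K^*K)\|$.

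Next, since $B^*B = (K^*K)^{p+1}$, all operators in sight are functions of $K^*K$ and commute, so by the spectral theorem the operator norm collapses to a scalar supremum over $\mu \in \mathrm{spec}(K^*K) \subseteq (0,1]$ (recall $\|K\|\le 1$):
\[
\|r_{2,\alpha/t}(B^*B)\varphi(K^*K)\| = \sup_{\mu} e^{-\frac{t}{\alpha}\mu^{p+1}}\varphi(\mu).
\]
Setting $\epsilon := (\alpha/t)^{1/(p+1)}$, so that $\tfrac{t}{\alpha}\mu^{p+1} = (\mu/\epsilon)^{p+1}$, the remaining task is to establish $\sup_\mu e^{-(\mu/\epsilon)^{p+1}}\varphi(\mu) \le c\,\varphi(\epsilon)$.

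Finally I would split the supremum at $\mu = \epsilon$. On $\mu \le \epsilon$ the monotonicity of the index function gives $\varphi(\mu)\le\varphi(\epsilon)$ while $e^{-(\mu/\epsilon)^{p+1}} \le 1$, contributing at most $\varphi(\epsilon)$. On $\mu > \epsilon$, admissibility of $\varphi$ under the qualification $\nu_0$ (that $\mu\mapsto\varphi(\mu)/\mu^{\nu_0}$ is non-increasing) yields $\varphi(\mu) \le \varphi(\epsilon)(\mu/\epsilon)^{\nu_0}$, so with $z := (\mu/\epsilon)^{p+1}$,
\[
e^{-(\mu/\epsilon)^{p+1}}\varphi(\mu) \le \varphi(\epsilon)\, e^{-z} z^{\nu_0/(p+1)} \le \varphi(\epsilon)\left(\frac{\nu_0}{p+1}\right)^{\frac{\nu_0}{p+1}} e^{-\frac{\nu_0}{p+1}},
\]
and combining the two regimes produces $c = \max\{(\nu_0/(p+1))^{\nu_0/(p+1)}, 1\}$. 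The main obstacle is precisely this passage from a general index function $\varphi$ to the effective power $\nu_0/(p+1)$: the composition with the $(p+1)$-th power induced by $B^*B = (K^*K)^{p+1}$ reshapes the spectral variable, and it is the qualification bound $\varphi(\mu)\le\varphi(\epsilon)(\mu/\epsilon)^{\nu_0}$ together with the explicit maximization of $z\mapsto e^{-z}z^{\nu_0/(p+1)}$ that transfers the power-type Showalter estimate \eqref{eq_resi2_error} to the general source condition while tracking the stated constant.
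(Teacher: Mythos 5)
Your proposal is correct and follows essentially the same route as the paper's proof: identify the bias operator as the Showalter residual $r_{2,\alpha/t}(B^*B)$, reduce to a scalar supremum via the spectral theorem, and split at the point where the spectral variable equals the effective regularization parameter, using monotonicity of $\varphi$ below it and the non-increase of $\mu \mapsto \varphi(\mu)/\mu^{\nu_0}$ (equivalently $\lambda \mapsto \varphi(\lambda^{1/(p+1)})/\lambda^{\nu_0/(p+1)}$) above it. The only cosmetic difference is that you maximize $z \mapsto e^{-z} z^{\nu_0/(p+1)}$ explicitly instead of citing \eqref{eq_resi2_error}, which even yields the slightly sharper factor $e^{-\nu_0/(p+1)}$ while still producing the stated constant.
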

	\begin{proof}
		Using Assumptions \ref{assp_1}-\ref{assp_3} we can rewrite the bias term by
		\begin{align*}
		\|u^{\dag} - \E\zeta(t)\| = \|e^{-\frac{t}{\alpha} B^* B} \varphi(K^*K)\| = \sup_{0< \lambda \leq \|B^*B\|} r_{2,\frac{\alpha}{t}}(\lambda)\varphi(\lambda^{1/(p+1)}).
		\end{align*}
		We separate the bias estimation into two cases, namely, $0< \frac{\alpha}{t} \leq \lambda$ and $\frac{\alpha}{t} >\lambda$.
		Notice that the qualification $\nu_0>0$ yields a non-increasing function $\lambda \mapsto \varphi(\lambda^{1/(p+1)})/\lambda^{\nu_0/(p+1)}$. So we derive
		\begin{align*}
		\varphi(\lambda^{1/(p+1)})/\lambda^{\nu_0/(p+1)} \leq \varphi\left(\left(\frac{\alpha}{t}\right)^{1/(p+1)}\right)/\left(\frac{\alpha}{t}\right)^{\nu_0/(p+1)}, \qquad 0 < \frac{\alpha}{t}  \leq \lambda.
		\end{align*}
		Then by the above inequality and (\ref{eq_resi2_error}) we derive
		\begin{align*}
		r_{2,\frac{\alpha}{t}}(\lambda)\varphi(\lambda^{1/(p+1)}) & \leq r_{2,\frac{\alpha}{t}}(\lambda) \lambda^{\nu_0/(p+1)}  \varphi\left(\left(\frac{\alpha}{t}\right)^{1/(p+1)}\right)/\left(\frac{\alpha}{t}\right)^{\nu_0/(p+1)} \\
		& \leq c \left(\frac{\alpha}{t}\right)^{\nu_0/(p+1)} \varphi\left(\left(\frac{\alpha}{t}\right)^{1/(p+1)}\right)/\left(\frac{\alpha}{t}\right)^{\nu_0/(p+1)} = c \varphi\left(\left(\frac{\alpha}{t}\right)^{1/(p+1)}\right)
		\end{align*}
		with the constant $c = \max\{(\nu_0/(p+1))^{\nu_0/(p+1)},1\}$.
		On the other hand, if $\frac{\alpha}{t} > \lambda$ we directly obtain
		\begin{align*}
		r_{2,\frac{\alpha}{t}}(\lambda)\varphi(\lambda^{1/(p+1)}) \leq r_{2,\frac{\alpha}{t}}(\lambda)\varphi\left(\left(\frac{\alpha}{t}\right)^{1/(p+1)}\right) \leq \varphi\left(\left(\frac{\alpha}{t}\right)^{1/(p+1)}\right)
		\end{align*}
		noticing $r_{2,\frac{\alpha}{t}} (\lambda) \leq 1$ if $\frac{\alpha}{t}>0$ and $\lambda\geq 0$.
	\end{proof}
	
	\begin{rem}
		We shall emphasize that the qualifications of non-stationary and stationary ARMs are different as shown in Propositions \ref{prop_1} and \ref{prop_3}. More precisely, non-stationary ARM has a qualification $\nu_0=p+1$ and the stationary one has a qualification $\nu_0=\infty$.
	\end{rem}
	
	The main quantitative difference between the non-stationary and stationary ARMs is provided by the following result.
	\begin{prop}\label{prop_4}
		Let Assumptions \ref{assp_1}-\ref{assp_2} hold, then the stationary ARM yields a bound of the variance
		\begin{align*}
		\E\|\E \zeta(t) - \zeta(t)\|^2 & \leq  \frac{1}{2}\alpha^{-1} \tr \left(\Omega\right) \qquad\text{for all}\qquad 0 \leq t \leq T.
		\end{align*}
	\end{prop}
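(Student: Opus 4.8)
The plan is to follow the template of Proposition \ref{prop_2}, since the stationary variance term has the same structure, the only---but decisive---difference being that the fixed covariance prevents the matrix exponential from telescoping. First I would simplify the operators appearing in the integrand of $\E\zeta(t)-\zeta(t)$ using Assumptions \ref{assp_1} and \ref{assp_2}. Because $C_0=\alpha^{-1}\Omega$, $\Sigma^{-1/2}A=K$, $A^*\Sigma^{-1/2}=K^*$ and $\Omega=(K^*K)^p$, one obtains $C_0 A^*\Sigma^{-1}A=\alpha^{-1}\Omega K^*K=\alpha^{-1}(K^*K)^{p+1}=\alpha^{-1}B^*B$ by \eqref{eq_Boperator}, and likewise $C_0 A^*\Sigma^{-1/2}=\alpha^{-1}\Omega K^*=\alpha^{-1}\Omega^{1/2}B^*$, where I used $B^*=\Omega^{1/2}K^*$. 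Since $\Omega$ commutes with every function of $B^*B$, the integrand collapses to $\alpha^{-1}\Omega^{1/2}e^{-(t-s)\alpha^{-1}B^*B}B^*$. In contrast to the non-stationary case, the exponential does not cancel against the leading resolvent, so it survives into the final estimate and an explicit $s$-integration is unavoidable.

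Next I would repeat the pre-smoothing step from Proposition \ref{prop_2}: insert $(BB^*)^{-p/(2(p+1))}(BB^*)^{p/(2(p+1))}=I$ in front of $\mathrm dW$, set $\mathrm d\mathcal{W}(s):=(BB^*)^{p/(2(p+1))}\mathrm dW(s)$ (a $Q$-Wiener process with $Q=(BB^*)^{p/(p+1)}$), and define $\Pi(s):=\alpha^{-1}\Omega^{1/2}e^{-(t-s)\alpha^{-1}B^*B}B^*(BB^*)^{-p/(2(p+1))}$. I would check $\Pi\in\Lambda_2(\Y_Q,\X)$ exactly as before---the Hilbert--Schmidt norm is finite because $\Omega$ is trace class and $e^{-2(t-s)\alpha^{-1}B^*B}B^*B$ is bounded---and then invoke the It\^o isometry of Theorem \ref{thm:si}. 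The weighting by $Q$ precisely undoes the pre-smoothing: using $B^*f(BB^*)=f(B^*B)B^*$ together with $(BB^*)^{-p/(2(p+1))}Q(BB^*)^{-p/(2(p+1))}=I$, the integrand of the isometry reduces to $\tr(\Pi(s)Q\Pi(s)^*)=\alpha^{-2}\tr\left(\Omega\, e^{-2(t-s)\alpha^{-1}B^*B}B^*B\right)$.

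It remains to carry out the time integral, which is where the essential difference to Proposition \ref{prop_2}---and the main (albeit mild) obstacle---lies. Substituting $r=t-s$ and interchanging the trace with the integral by Tonelli's theorem and positivity, I would pass to the spectral representation of $B^*B$: writing $\lambda$ for a generic spectral value, $\Omega$ acts as $\lambda^{p/(p+1)}$ and one has $\int_0^t \lambda\, e^{-2r\alpha^{-1}\lambda}\,\mathrm dr=\tfrac{\alpha}{2}\bigl(1-e^{-2t\alpha^{-1}\lambda}\bigr)\le\tfrac{\alpha}{2}$. This spectral bound is what lets the (a priori unbounded) factor $(B^*B)^{-1}$ implicit in the integration cancel cleanly against the $B^*B$ already present, so that $\alpha^{-2}\tr\left(\Omega B^*B\int_0^t e^{-2r\alpha^{-1}B^*B}\mathrm dr\right)=\tfrac12\alpha^{-1}\tr\left(\Omega\bigl(I-e^{-2t\alpha^{-1}B^*B}\bigr)\right)$. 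Bounding $I-e^{-2t\alpha^{-1}B^*B}\preceq I$ and using positivity of $\Omega$ then yields $\E\|\E\zeta(t)-\zeta(t)\|^2\le\tfrac12\alpha^{-1}\tr(\Omega)$ for all $0\le t\le T$, as claimed. Note that the bound is uniform in $t$, reflecting the fact that the stationary filter never reduces its covariance.
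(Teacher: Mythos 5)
Your proposal is correct and follows essentially the same route as the paper: reduce the integrand to $\alpha^{-1}\Omega^{1/2}e^{-(t-s)\alpha^{-1}B^*B}B^*$, apply the It\^o isometry via the same $Q$-Wiener pre-smoothing device as in Proposition \ref{prop_2}, pass to the spectral representation of $B^*B$, and use $\int_0^t \lambda e^{-2r\lambda/\alpha}\,\mathrm{d}r=\tfrac{\alpha}{2}\bigl(1-e^{-2t\lambda/\alpha}\bigr)\le\tfrac{\alpha}{2}$ together with $\tr$ of $\Omega$. The only cosmetic difference is that you invoke Tonelli where the paper cites Levy's monotone convergence theorem to interchange the trace (sum) with the time integral.
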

	\begin{proof}
		Similar to the proof of Proposition \ref{prop_2}, we recall the discussion in Subsection \ref{sec:stochasticinte} and implement Assumption \ref{assp_1} to derive
		\begin{align*}
		\E\|\E \zeta(t) - \zeta(t)\|^2 = \frac{1}{\alpha^2}\int_0^t \tr  \left( e^{-2\frac{t-s}{\alpha}\Omega A^* \Sigma^{-1}A } \Omega^2 A^* \Sigma^{-1}A \right) \mathrm{d}s.
		\end{align*}
		Then Assumption \ref{assp_2} further yields
		\begin{align*}
		\E\|\E \zeta(t) - \zeta(t)\|^2 & = \frac{1}{\alpha^2}\int_0^t \tr  \left( e^{-2\frac{t-s}{\alpha}B^*B } \Omega B^*B \right) \mathrm{d}s.
		\end{align*}
		Let now $\lambda_j>0$ be the eigenvalues of $B^*B$. Then we can compute the trace as a sum, and if we furthermore apply Levy's monotone convergence theorem (exploiting $\lambda_j \geq 0$), we find
		\begin{align*}
		\E\|\E \zeta(t) - \zeta(t)\|^2 & = \frac{1}{\alpha^2}\int_0^t \tr  \left( e^{-2\frac{s}{\alpha}B^*B }  (B^*B)^{\frac{p}{p+1}+1} \right) \mathrm{d}s \\
		& = \frac{1}{\alpha^2} \int_0^t \sum_{j=1}^{\infty} \left[e^{-\frac{2}{\alpha} s \lambda_j} (\lambda_j)^{\frac{p}{p+1}+1}\right] \mathrm{d}s \\
		& = -\frac{1}{2\alpha} \sum_{j=1}^{\infty} (\lambda_j)^{\frac{p}{p+1}} \int_0^t e^{-\frac{2}{\alpha}\lambda_j s } \mathrm{d}\left(-\frac{2}{\alpha}\lambda_j s\right) \\
		& = \frac{1}{2\alpha} \sum_{j=1}^{\infty} (\lambda_j)^{\frac{p}{p+1}} \left(1-e^{-\frac{2}{\alpha} t \lambda_j}\right) \\
		& \leq \frac{1}{2}\alpha^{-1} \tr(\Omega),
		\end{align*}
		where the last line follows from $1-e^{-\frac{2}{\alpha} t \lambda_j} \leq 1$ for all $j$.
	\end{proof}
	
	One may doubt whether it is possible to derive an error bound of the variance with respect to the effective dimension as shown in Proposition \ref{prop_2}. The following calculation confirms that such an upper bound blows up faster than it is in Proposition \ref{prop_4} when the time variable $t$ becomes large.
	Indeed, we could estimate
	\begin{align*}
	\E\|\E \zeta(t) - \zeta(t)\|^2 & = \frac{1}{\alpha^2}\int_0^t \tr  \left( e^{-2\frac{t-s}{\alpha}B^*B } \Omega B^*B \right) \mathrm{d}s \\
	& = \frac{1}{\alpha^2}\int_0^t \tr  \left( e^{-2\frac{t-s}{\alpha}B^*B } \Omega \left(\frac{\alpha}{t}+B^*B\right) \left(\frac{\alpha}{t}+B^*B\right)^{-1} B^*B \right) \mathrm{d}s \\
	& \leq \frac{1}{\alpha^2} \mathcal{N}\left(\frac{\alpha}{t}\right)\int_0^t \sup_{0<\lambda \leq 1} \left|e^{-2\frac{t-s}{\alpha}\lambda} \lambda^{\frac{p}{p+1}} \left(\frac{\alpha}{t}+\lambda\right) \right|  \mathrm{d}s \\
	& \leq  \frac{1}{\alpha^2} \mathcal{N}\left(\frac{\alpha}{t}\right)\int_0^t \left( \frac{\alpha}{t}\sup_{0<\lambda \leq 1} \left|e^{-2\frac{t-s}{\alpha}\lambda} \lambda^{\frac{p}{p+1}} \right| + \sup_{0<\lambda \leq 1} \left|e^{-2\frac{t-s}{\alpha}\lambda} \lambda^{1+\frac{p}{p+1}} \right| \right)  \mathrm{d}s.
	\end{align*}
	Using (\ref{eq_resi2_error}), we derive
	\begin{align*}
	\sup_{0<\lambda \leq 1} \left|e^{-2\frac{t-s}{\alpha}\lambda} \lambda^{\frac{p}{p+1}} \right| & \leq 1/\left(1+2\frac{t-s}{\alpha}\right)^{\frac{p}{p+1}} \\
	& \leq 1/\left(2\frac{t-s}{\alpha}\right)^{\frac{p}{p+1}}.
	\end{align*}
	In particular, we also obtain, with $c=\left(\frac{2p+1}{p+1}\right)^{\frac{2p+1}{p+1}}$
	\begin{align*}
	\sup_{0<\lambda \leq 1} \left|e^{-2\frac{t-s}{\alpha}\lambda} \lambda^{1+\frac{p}{p+1}} \right|
	& \leq c/\left(1+2\frac{t-s}{\alpha}\right)^{\frac{2p+1}{p+1}}
	\end{align*}
	we thus derive
	\begin{align*}
	\E\|\E \zeta(t) - \zeta(t)\|^2 & \leq c_1 \alpha^{-\frac{1}{p+1}} t^{-\frac{p}{p+1}} \mathcal{N}\left(\frac{\alpha}{t}\right) + c_2 \alpha^{-1}  \left(1-\left(1+2\frac{t}{\alpha}\right)^{-\frac{p}{p+1}}\right) \mathcal{N}\left(\frac{\alpha}{t}\right),
	\end{align*}
	with $c_1 = 2^{-\frac{p}{p+1}} (p+1)^{-1}$ and $c_2 = \frac{p}{2(p+1)}\left(\frac{2p+1}{p+1}\right)^{\frac{2p+1}{p+1}}$. Note that $1-\frac1 \gamma \leq \gamma-1$ for all $\gamma \geq 0$. Hence
	\[
	\left(1-\left(1+2\frac{t}{\alpha}\right)^{-\frac{p}{p+1}}\right) \leq \left(1+2\frac{t}{\alpha}\right)^{\frac{p}{p+1}}-1.
	\]
	As the mapping $1+\gamma \mapsto (1+\gamma)^q$ with $q = \frac{p}{p+1} < 1$ is concave, it holds $\left(1+\gamma\right)^q - 1 \leq 1^q + \gamma^q - 1 = \gamma^q$, and thus
	\[
	\left(1-\left(1+2\frac{t}{\alpha}\right)^{-\frac{p}{p+1}}\right) \leq \left(1+2\frac{t}{\alpha}\right)^{\frac{p}{p+1}}-1 \leq \left(2\frac{t}{\alpha}\right)^{\frac{p}{p+1}}.
	\]
	We thus obtain
	\begin{align}\label{eq_stationARM_effectbound}
	\E\|\E \zeta(t) - \zeta(t)\|^2 & \leq c_1 \alpha^{-\frac{1}{p+1}} t^{-\frac{p}{p+1}} \mathcal{N}\left(\frac{\alpha}{t}\right) + c_2 2^{\frac{p}{p+1}} \alpha^{-\frac{2p+1}{p+1}} t^{\frac{p}{p+1}} \mathcal{N}\left(\frac{\alpha}{t}\right)
	\end{align}
	where the second term in the right-hand side blows up faster than a constant function when $t$ increases.
	
	To some extend, it seems intuitive that the stationary ARM does not yield a better bound for the variance, as in \eqref{eq_3dvar} the covariance is fixed, and hence no improvement over time is to be expected. In fact, the bound from Proposition \ref{prop_4} reminds a bit of the classical worst-case bound in deterministic inverse problems, exploiting that the initial covariance $\Omega$ was assumed to be of trace class. However, this is insufficient for minimax optimality in statistical inverse problems and emphasizes the difference between the stationary ARM and Showalter regularization, the latter known to be minimax optimal in many situations (cf. \cite{bhmr07}). 
	
	We summarize both bias and variance bounds and derive the asymptotic behavior of the stationary ARM below.
	\begin{thm}\label{thm_2}
		Let Assumptions \ref{assp_1}-\ref{assp_3} hold, then the stationary ARM yields the MSE estimate
		\begin{align*}
		\E\| \zeta(t)-u^{\dag}\|^2 \leq c \varphi^2\left(\left(\frac{\alpha}{t}\right)^{\frac{1}{p+1}}\right) +  \frac{1}{2}  \alpha^{-1} \tr \left(\Omega\right)\qquad\text{for all}\qquad 0 \leq t \leq T.
		\end{align*}
		with the constant $c = \max\{(\nu_0/(p+1))^{\nu_0/(p+1)},1\}$.
	\end{thm}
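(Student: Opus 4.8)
The statement is a direct synthesis of the bias and variance bounds already obtained in Propositions \ref{prop_3} and \ref{prop_4}, so my plan is simply to glue them together through the bias-variance decomposition. First I would record the decomposition for the stationary ARM,
\[
\E\|\zeta(t)-u^{\dag}\|^2 = \|u^{\dag} - \E\zeta(t)\|^2 + \E \|\E\zeta(t)-\zeta(t)\|^2,
\]
which is the exact analogue of \eqref{eq_bvdecomp}. The only point that genuinely deserves a line of justification is that the cross term vanishes: since $u^{\dag} - \E\zeta(t)$ is deterministic and $\E\zeta(t) - \zeta(t)$ is a mean-zero It\^o integral (it is precisely the stochastic integral appearing in \eqref{eq_3dvarerror}, whose expectation is zero), the inner product $\E\langle u^{\dag} - \E\zeta(t),\, \E\zeta(t) - \zeta(t)\rangle$ collapses to zero. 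This is what guarantees the decomposition holds verbatim in the infinite-dimensional Hilbert space setting, where the variance term is controlled via Theorem \ref{thm:si}.

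Once the decomposition is in place, I would invoke Proposition \ref{prop_3} to bound the bias term by $c\,\varphi^2\bigl((\alpha/t)^{1/(p+1)}\bigr)$ with the stated constant $c = \max\{(\nu_0/(p+1))^{\nu_0/(p+1)},1\}$ (this is where Assumption \ref{assp_3} enters), and Proposition \ref{prop_4} to bound the variance term by $\tfrac12\,\alpha^{-1}\tr(\Omega)$ (requiring only Assumptions \ref{assp_1}--\ref{assp_2}). Adding the two bounds yields the claimed estimate for every $0 \leq t \leq T$, with Assumptions \ref{assp_1}--\ref{assp_3} being exactly the union of the hypotheses of the two propositions. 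Since the constant in the variance bound is absolute and does not interact with $c$, no further bookkeeping is needed. I therefore expect no real obstacle here beyond the harmless verification of the vanishing cross term; the substantive analytic work has already been carried out in Propositions \ref{prop_3} and \ref{prop_4}, and this theorem merely packages their conclusions.
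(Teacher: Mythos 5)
Your proposal is correct and matches the paper exactly: Theorem \ref{thm_2} is stated as a summary of Propositions \ref{prop_3} and \ref{prop_4} combined through the bias--variance decomposition \eqref{eq_bvdecomp}, with no further argument given. Your extra remark that the cross term vanishes because the stochastic integral is mean-zero is a harmless (and correct) piece of bookkeeping that the paper leaves implicit.
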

	
	Similar to the previous subsection, we provide some discussion concerning the above error bound. To obtain a suitable bound for the MSE in case of the stationary ARM, we can again tune the parameter $\alpha$ to balance both bias and variance. For instance, if the ending time is fixed  sufficiently large by $T$, we let
	\begin{align*}
	\Psi_{\varphi}\left(\epsilon\right) := \epsilon^{\frac{p+1}{2}} \varphi \left(\epsilon\right)
	\end{align*}
	with $\epsilon := \left(\frac{\alpha}{T}\right)^{\frac{1}{p+1}}$. Then by choosing $\alpha^{\Psi}_*$ where $\epsilon^{\Psi}_*=\left(\frac{\alpha^{\Psi}_*}{T}\right)^{\frac{1}{p+1}}$ is a solution to the equation
	\begin{align}\label{eq_cor_alphastar2}
	\Psi_{\varphi}\left(\epsilon^{\Psi}_*\right)  = \sqrt{\frac{1}{T}}
	\end{align}
	we obtain the following corollary concerning the a priori parameter choice rule $\alpha^{\Psi}_*$.
	
	\begin{cor}
		Let Assumptions \ref{assp_1}-\ref{assp_3} hold and $T$ be the sufficiently large ending time of the stationary ARM, if we choose the a priori parameter choice $\alpha^{\Psi}_*$ satisfying (\ref{eq_cor_alphastar2}), then
		\begin{align*}
		\E\| m(T)-u^{\dag}\|^2 \leq c (1+\tr(\Omega))\varphi^2\left(\Psi_{\varphi}^{-1}\left(\sqrt{\frac{1}{T}}\right)\right), \quad \textrm{as}\quad T\rightarrow \infty.
		\end{align*}
	\end{cor}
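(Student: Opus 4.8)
The plan is to substitute the balancing parameter $\alpha^{\Psi}_*$ directly into the MSE bound of Theorem \ref{thm_2} and then exploit the defining relation \eqref{eq_cor_alphastar2} to collapse both the bias and the variance contribution into a single $\varphi^2$-term. First I would evaluate the bound of Theorem \ref{thm_2} at $t = T$ and reparametrize it in terms of $\epsilon := (\alpha/T)^{1/(p+1)}$. Since this means $\alpha = T \epsilon^{p+1}$, hence $\alpha^{-1} = T^{-1}\epsilon^{-(p+1)}$, the variance term transforms as
\[
\frac{1}{2}\alpha^{-1}\tr(\Omega) = \frac{1}{2}\tr(\Omega)\, T^{-1}\epsilon^{-(p+1)},
\]
so that the full estimate reads $\E\|\zeta(T)-u^{\dag}\|^2 \leq c\,\varphi^2(\epsilon) + \tfrac{1}{2}\tr(\Omega)\, T^{-1}\epsilon^{-(p+1)}$.

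The key observation is that $\Psi_{\varphi}(\epsilon)^2 = \epsilon^{p+1}\varphi^2(\epsilon)$, so the defining equation \eqref{eq_cor_alphastar2}, namely $\Psi_{\varphi}(\epsilon^{\Psi}_*) = \sqrt{1/T}$, is exactly the balancing condition $(\epsilon^{\Psi}_*)^{p+1}\varphi^2(\epsilon^{\Psi}_*) = T^{-1}$. Evaluated at $\epsilon = \epsilon^{\Psi}_*$ the prefactor $T^{-1}\epsilon^{-(p+1)}$ of the variance therefore equals precisely $\varphi^2(\epsilon^{\Psi}_*)$, and I obtain
\[
\E\|\zeta(T)-u^{\dag}\|^2 \leq c\,\varphi^2(\epsilon^{\Psi}_*) + \tfrac{1}{2}\tr(\Omega)\,\varphi^2(\epsilon^{\Psi}_*) \leq c\,(1+\tr(\Omega))\,\varphi^2(\epsilon^{\Psi}_*)
\]
after relabelling the constant (using $c + \tfrac12\tr(\Omega) \leq \max\{c,\tfrac12\}(1+\tr(\Omega))$). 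It then remains only to rewrite $\epsilon^{\Psi}_* = \Psi_{\varphi}^{-1}(\sqrt{1/T})$, which is immediate from \eqref{eq_cor_alphastar2}, to arrive at the claimed bound.

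The one point requiring care -- and the only genuine obstacle -- is to justify that $\Psi_{\varphi}$ is invertible on a neighbourhood of the origin, so that $\epsilon^{\Psi}_*$ is well-defined for $T$ sufficiently large. Here I would argue that $\Psi_{\varphi}(\epsilon) = \epsilon^{(p+1)/2}\varphi(\epsilon)$ is itself an index function: it is continuous, tends to $0$ as $\epsilon \searrow 0$, and is strictly increasing because $\epsilon \mapsto \epsilon^{(p+1)/2}$ is strictly increasing while $\varphi$ is non-decreasing. Consequently, for all sufficiently large $T$ the value $\sqrt{1/T}$ lies in the range of $\Psi_{\varphi}$, the solution $\epsilon^{\Psi}_*$ of \eqref{eq_cor_alphastar2} exists and is unique, and $\Psi_{\varphi}^{-1}(\sqrt{1/T}) \to 0$ as $T \to \infty$. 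This is precisely what the ``$T \to \infty$'' qualifier in the statement encodes, and the rest is the routine substitution described above; note that the argument mirrors the one behind Corollary \ref{cor_NSARM} for the non-stationary ARM, the only difference being the form of the balancing functional.
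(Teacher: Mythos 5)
Your proposal is correct and is exactly the argument the paper intends: the corollary is stated without an explicit proof, but it follows by substituting $\epsilon=(\alpha/T)^{1/(p+1)}$ into Theorem \ref{thm_2} and using $\Psi_{\varphi}(\epsilon^{\Psi}_*)^2=(\epsilon^{\Psi}_*)^{p+1}\varphi^2(\epsilon^{\Psi}_*)=1/T$ to equate the variance prefactor with $\varphi^2(\epsilon^{\Psi}_*)$, just as you do. Your added justification that $\Psi_{\varphi}$ is an index function (hence invertible near $0$, making $\epsilon^{\Psi}_*$ well-defined for large $T$) is a welcome detail the paper only addresses in the discussion following the corollary; note also that the $m(T)$ on the left-hand side of the statement is a typo for $\zeta(T)$, which you correctly work with.
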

	
	Comparing the variance terms in Propositions \ref{prop_2} and \ref{prop_4}, we quantitatively observe a difference between non-stationary and stationary ARMs, where the use of the effective dimension leads to a better variance bound when implementing the non-stationary ARM. Some discussion concerning the corresponding MSE bounds is provided below.
	
	Noticing that both $\Theta_{\varphi}(\epsilon)$ and $\Psi_{\varphi}(\epsilon)$ are non-decreasing continuous functions and $\lim_{\epsilon\searrow 0}\Theta_{\varphi}(\epsilon) = 0$, $\lim_{\epsilon\searrow 0}\Psi_{\varphi}(\epsilon) = 0$, we obtain $\lim_{T\nearrow \infty}\Theta^{-1}_{\varphi}\left(\sqrt{\frac{1}{T}}\right) = 0$ and $\lim_{T\nearrow \infty}\Psi^{-1}_{\varphi}\left(\sqrt{\frac{1}{T}}\right) = 0$, namely $\lim_{T\nearrow \infty}\epsilon^{\Theta}_* = \lim_{T\nearrow \infty}\epsilon^{\Psi}_* = 0$. Meanwhile, for all $\epsilon \in (0,\infty)$, there holds
	\begin{align*}
	\frac{\Psi_{\varphi}(\epsilon)}{\Theta_{\varphi}(\epsilon)} = \epsilon^{\frac{p}{2}} \sqrt{\mathcal{N}(\epsilon^{p+1})} \leq \sqrt{\tr(\Omega)},
	\end{align*}
	where the latter estimate follows from \eqref{eq_resi_error} and the computation
	\begin{align*}
	\mathcal{N}(\epsilon^{p+1}) &  = \tr\left( \left(\epsilon^{p+1} + B^* B\right)^{-1} B^*B \right) \\
	& \leq \frac{1}{\epsilon^{p+1}} \left\|r_{1,\epsilon^{p+1}}(B^*B) (B^*B)^{\frac{1}{p+1}}\right\| \tr\left( (B^*B)^{\frac{p}{p+1}}\right) \\
	& \leq \frac{1}{\epsilon^p}  \tr\left(\Omega\right).
	\end{align*}
	Hence, whenever
	\begin{equation}\label{eq:cond}
	\epsilon^{\frac{p}{2}} \sqrt{\mathcal{N}(\epsilon^{p+1})} \to 0\qquad\text{as}\qquad \epsilon \to 0,
	\end{equation}
	we have $\lim_{\epsilon\searrow 0} \Psi_{\varphi}(\epsilon)/\Theta_{\varphi}(\epsilon) = 0$ and consequently
	\begin{align*}
	\lim_{T\nearrow \infty} \frac{\Theta^{-1}_{\varphi}\left(\sqrt{\frac{1}{T}}\right)}{\Psi^{-1}_{\varphi}\left(\sqrt{\frac{1}{T}}\right)} = 0.
	\end{align*}
	This shows that for H\"{o}lder-type source conditions with $\nu \leq \nu_0 = p+1$ (where according to Remark \ref{rem_2} \eqref{eq:cond} is satisfied) the obtained rates for non-stationary ARM in Theorem \ref{thm_1} are better than the ones from Theorem \ref{thm_2} for the stationary ARM. However, under higher order source conditions, the stationary ARM will yield a better convergence rate due to its infinite qualification, as discussed in the following Remark.
	
	\begin{rem}\label{rem_varphichocie_SARM}
		We take two special choices of $\varphi(\lambda)$. The first one considers $\varphi(\lambda) =\lambda^{p+1}$ which yields
		\begin{align*}
		\E\| m(T)-u^{\dag}\|^2 \leq c (1+\tr(\Omega)) T^{-\frac{2}{3}}, \quad \textrm{as}\quad T\rightarrow \infty.
		\end{align*}
		Meanwhile, if $\varphi(\lambda) =\lambda^{2(p+1)}$ we obtain
		\begin{align*}
		\E\| m(T)-u^{\dag}\|^2 \leq c (1+\tr(\Omega)) T^{-\frac{4}{5}}, \quad \textrm{as}\quad T\rightarrow \infty.
		\end{align*}
		Because of high qualification of the stationary ARM, we can obtain a better error estimate if the unknown solution is sufficiently smooth. In view of the error estimate in Remark \ref{rem_varphichoice_NSARM} for the non-stationary ARM, a better error estimate is only available if either the modified forward operator or the prior is smooth enough.
	\end{rem}
	
	\newlength{\fwidth}
	\newlength{\fheight}
	
	\section{Numerics}\label{se4}
	
	In this section we will describe a possible implementation of non-stationary and stationary ARMs \eqref{eq_KFm} and \eqref{eq_3dvar}. Therefore note that both give implicit formulas for the mean functions $m \left(t\right)$ and $\zeta\left(t\right)$ respectively as solutions of initial value problems. In principle, these initial value problems are of stochastic nature (due to the random noise contributions), but we will however solve them by the standard explicit Euler method for the solution of deterministic ODE's, as the randomness is purely due to the data and hence predetermined. Let therefore the process $\left(z\left(t\right)\right)_{t \in \left[0,T\right]}$ as in \eqref{eq_consys} be given and let $h > 0$ be a time step. As mentioned before, we have $\delta = 1/\sqrt{T}$ referring to the noise level in the original problem \eqref{eq_LInP}.
	
	\paragraph{Non-stationary ARM}
	
	The explicit Euler method for the initial value problem \eqref{eq_KFm} leads to the iteration
	\begin{align*}
	m_n &:= m_{n-1} + C_{n-1}A^* \Sigma^{-1} \left(z\left(nh\right) - z\left((n-1)h\right)  -hAm_{n-1}\right), \\
	C_n &:= C_{n-1} - h C_{n-1} A^* \Sigma^{-1} A C_{n-1}
	\end{align*}
	with $n \in \left\{1,..., \left\lfloor \frac{T}{h}\right\rfloor\right\}$.
	
	Note, that in view of \eqref{eq_resi_error}, this method should (for suitably chosen $h>0$) lead to results comparable with the Tikhonov regularization applied to the final datum $z \left(T\right)$, i.e.
	\[
	\hat m_\epsilon := \left(\epsilon + K^*K\right)^{-1} K^* z\left(T\right)
	\]
	with $\epsilon = \alpha/T$ and the pre-whitened operator $K = \Sigma^{-1/2} A$.
	
	\paragraph{Stationary ARM}
	
	The explicit Euler method for the initial value problem \eqref{eq_3dvar} leads to the iteration
	\begin{align*}
	\zeta_n &:= \zeta_{n-1} + C_{0}A^* \Sigma^{-1} \left(z\left(nh\right) - z\left((n-1)h\right)  -hA\zeta_{n-1}\right)
	\end{align*}
	with $n \in \left\{1,..., \left\lfloor \frac{T}{h}\right\rfloor\right\}$.
	
	Note, that in view of \eqref{eq_resi2_error}, this method should (for suitably chosen $h>0$) lead to results comparable with the Showalter regularization applied to the final datum $z \left(T\right)$, i.e.
	\[
	\hat \zeta_\epsilon := \left(1- \exp\left(\frac{1}{\epsilon}K^*K\right)\right) \left(K^*K\right)^{-1} K^*z\left(T\right)
	\]
	again with $\epsilon = \alpha/T$ and the pre-whitened operator $K = \Sigma^{-1/2} A$.

	
	In current section, we consider a prototypical inverse problem as an illustration. In particular, both non-stationary and stationary ARMs are implemented to solve the following second anti-derivative problem. Let $A$ be given as the integral operator $A : \mathbf L^2 \left(\left[0,1\right]\right) \to \mathbf L^2 \left(\left[0,1\right]\right)$ of the first kind
	\begin{equation}\label{eq:op}
	\left(Au\right) \left(x\right):= \int_0^1 k\left(x,y\right)u\left(y\right) \,\mathrm d y, \qquad x \in \left[0,1\right]
	\end{equation}
	with the kernel $k\left(x,y\right) = \min \left\{ x \left(1-y\right), y \left(1-x\right) \right\}$, $x,y \in \left[0,1\right]$, see \cite{hw14,w18}. We discretize the operator $A$ using the composite midpoint rule. Concerning the solution $u^\dagger$, we consider three different choices of increasing smoothness. 	
	To obtain numerical results, we consider $T \in \left\{100 \cdot 2^i~\big|~0\leq i \leq 9\right\}$, and compute empirical values of the root mean-squared error (RMSE) $\sqrt{\E\left\Vert \hat u - u^\dagger\right\Vert^2 }$ by $M = 100$ Monte-Carlo repetitions. The parameter $\alpha$ is then chosen as the value in $\left\{0.1 \cdot 2^j~\big|~0\leq j \leq 39\right\}$ which minimizes the RMSE for the corresponding value of $T$. The remaining parameters are set as the discretization level $n = 512$, the time step $h = T/100$ and the noise covariance $\Sigma = I$. As for the initial covariance, we set $C_0 = (A^*A)^{p/p+1}$ noticing $\Sigma=I$, where $p$ can be chosen as zero in the finite dimensional numerical examples and consequently $C_0 = I$.
	
	\begin{rem}
		In this setting, the minimax rate of convergence for the RMSE for $u^\dagger\in H^s \left[0,1\right]$ with the standard Sobolev space
		\[
		H^s \left[0,1\right] := \left\{u \in L^2 \left[0,1\right]~\big|~ \sum_{k=1}^\infty \left(1+k^2\right)^s u_k^2 < \infty \right\}
		\]
		where $u_k$ denote the Fourier coefficients of $u$ is given by $\mathcal O \left(\delta^{\frac{s}{s + \frac52}}\right)$, see e.g. \cite{dm18,c08}. This makes use of the fact that the singular values $\sigma_k$ of our operator $A$ in \eqref{eq:op} decay like $\sigma_k \sim k^{-2}$. Using this, we can furthermore see that
		\[
		u^\dagger \in H^s\left[0,1\right]\qquad\text{if and only if}\qquad u^\dagger \in R \left(\left(A^*A\right)^{\frac{s}{4}}\right).
		\]
		This shows that the minimax rate of convergence for the RMSE under the source condition $u^\dagger \in R \left(\left(A^*A\right)^\nu\right)$ is $\mathcal O \left(\delta^{\frac{4\nu}{4\nu + \frac{5}{2}}}\right)$.
	\end{rem}
	
	\begin{example}[Exact solutions with low smoothness]\label{example_1}
		The first example chooses
		$u^\dagger$ as the hat function
		\[
		u^\dagger\left(x\right) = \begin{cases} x & \text{if }0 \leq x \leq \frac12, \\ 1-x & \text{if }\frac12 \leq x \leq 1, \end{cases}
		\]
		which leads to the exact datum
		\[
		\left(Au^\dagger \right)(x) = \begin{cases} -\frac{x\left(4x^2-3\right)}{24} & \text{if }0 \leq x \leq \frac12,\\ \frac{\left(x-1\right)\left(4x^2-8x+1\right)}{24} & \text{if } \frac12 \leq x \leq 1.\end{cases}
		\]
		It can readily be seen, that this function satisfies a source condition of the form $u^\dagger \in R \left(\left(A^*A\right)^{\frac38 - \varepsilon}\right)$ for any $\varepsilon > 0$, and consequently the optimal rate of convergence of the RMSE for this choice of $A$ and $u^\dagger$ in \eqref{eq_LInP} is $\mathcal O \left(\delta^{\frac38-\varepsilon}\right)$, i.e. $\mathcal O \left(\left(\frac{1}{\sqrt{T}}\right)^{\frac38-\varepsilon}\right)$ for any $\varepsilon>0$ and $p=0$ (see e.g. \cite{lw20} for details).
	\end{example}
	
	In Figure \ref{fig:antiderivative1}, we present the empirical RMSEs and optimal values of the regularization parameter $\alpha$ driven by 100 runs. As can be observed in the left panel, the empirical RMSE by the non-stationary ARM fits the theoretical rate accurately. Meanwhile, the empirical RMSE by the stationary ARM decays asymptotically when $T$ increases but with a smaller slope compared with the non-stationary ARM, which numerically verifies the quantitative difference between both methods. We shall mention that in Example \ref{example_1} we have chosen the prior with $p=0$ since the discretization of the forward operator yields a finite dimensional matrix. We will tune this value in the high smoothness example to visualize the improvement induced by the smooth initial covariance.
	
	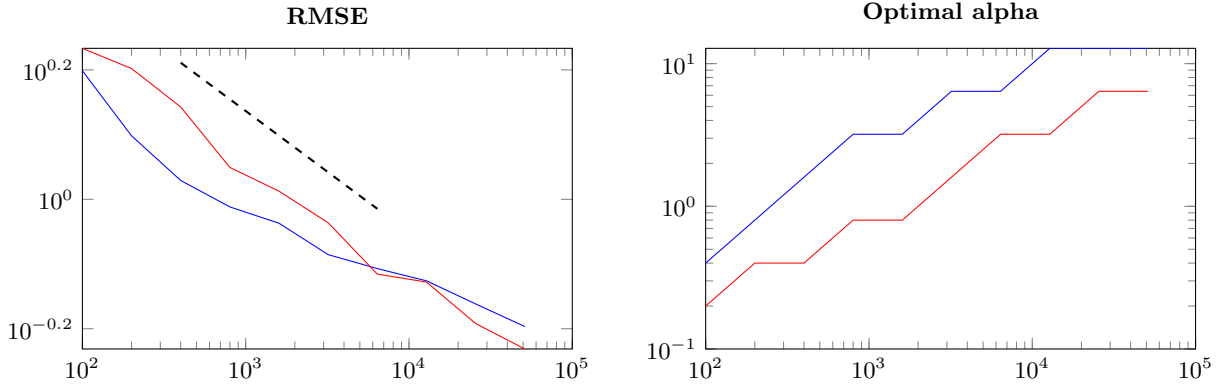
\begin{figure}[!htb]
		\footnotesize
		\setlength{\fwidth}{.39\textwidth}
		\setlength{\fheight}{4cm}
		\centering
		\begin{tabular}{rr}
			\begin{tikzpicture}[baseline]

\begin{axis}[%
width=\fwidth,
height=\fheight,
scale only axis,
xmode=log,
xmin=100,
xmax=100000,
xminorticks=true,
ymode=log,
ymin=0.587152623969964,
ymax=1.71137744932664,
yminorticks=true,
title style={font=\bfseries},
title={RMSE}
]
\addplot [color=blue]
  table[row sep=crcr]{%
100	1.58010237583868\\
200	1.25344257206863\\
400	1.0694029357059\\
800	0.973318075810553\\
1600	0.918387446532354\\
3200	0.821359098386648\\
6400	0.781629571239895\\
12800	0.748322257767139\\
25600	0.689230368461008\\
51200	0.635930716937176\\
};
\label{stat}

\addplot [color=red]
  table[row sep=crcr]{%
100	1.71137744932664\\
200	1.59246901027884\\
400	1.3889706323038\\
800	1.12009575395568\\
1600	1.02943129369949\\
3200	0.919844543431405\\
6400	0.766436388681523\\
12800	0.744974839987077\\
25600	0.642740187706653\\
51200	0.587152623969964\\
};
\label{non_stat}

\addplot [color=black,dashed,thick]
  table[row sep=crcr]{%
400	1.62586228156059\\
800	1.42771207223012\\
1600	1.2537112056226\\
3200	1.10091650667945\\
6400	0.966743496623206\\
};
\label{opt}

\end{axis}
\end{tikzpicture} & \begin{tikzpicture}[baseline]

\begin{axis}[%
width=\fwidth,
height=\fheight,
scale only axis,
xmode=log,
xmin=100,
xmax=100000,
xminorticks=true,
ymode=log,
ymin=0.1,
ymax=12.8,
yminorticks=true,
title style={font=\bfseries},
title={Optimal alpha}
]
\addplot [color=blue]
  table[row sep=crcr]{%
100	0.4\\
200	0.8\\
400	1.6\\
800	3.2\\
1600	3.2\\
3200	6.4\\
6400	6.4\\
12800	12.8\\
25600	12.8\\
51200	12.8\\
};

\addplot [color=red]
  table[row sep=crcr]{%
100	0.2\\
200	0.4\\
400	0.4\\
800	0.8\\
1600	0.8\\
3200	1.6\\
6400	3.2\\
12800	3.2\\
25600	6.4\\
51200	6.4\\
};

\end{axis}
\end{tikzpicture}\\
		\end{tabular}
		\caption{(Low smoothness) Empirical RMSEs (left) and optimal values of the regularization parameter $\alpha$ (right) for solving the second anti-derivative problems (simulated from $100$ runs) with $u^\dagger$ as in Example \ref{example_1}. Shown are stationary ARM (\ref{stat}), non-stationary ARM with $p = 0$ (\ref{non_stat}), cf. Assumption \ref{assp_2}, as well as the optimal rate of convergence (\ref{opt}).}
		\label{fig:antiderivative1}
	\end{figure}

	\begin{example}[Exact solutions with intermediate smoothness]\label{example_2}
		$u^\dagger$ is chosen as a scaled version of $A$ applied to the above hat function, i.e.
		\[
		u^\dagger\left(x\right) = 10\begin{cases} -\frac{x\left(4x^2-3\right)}{24} & \text{if }0 \leq x \leq \frac12,\\ \frac{\left(x-1\right)\left(4x^2-8x+1\right)}{24} & \text{if } \frac12 \leq x \leq 1.\end{cases}
		\]
		This leads to the exact datum
		\[
		\left(Au^\dagger \right)(x) = 10\begin{cases} \frac{x\left(16x^4 - 40x^2 + 25\right)}{1920} & \text{if }0 \leq x \leq \frac12,\\ \frac{-16x^5 + 80x^4-120x^3 + 40x^2 + 15x + 1}{1920} & \text{if } \frac12 \leq x \leq 1.\end{cases}
		\]
		As $A$ is self-adjoint, we obtain $u^\dagger \in R \left(\left(A^*A\right)^{\frac78 - \varepsilon}\right)$ for any $\varepsilon > 0$, and consequently the optimal rate of convergence of the RMSE for this choice of $A$ and $u^\dagger$ in \eqref{eq_LInP} is $\mathcal O \left(\delta^{\frac{7}{12}-\varepsilon}\right)$, i.e. $\mathcal O \left(\left(\frac{1}{\sqrt{T}}\right)^{\frac{7}{12}-\varepsilon}\right)$ for any $\varepsilon>0$ and $p=0$.
	\end{example}
	
	In Figure \ref{fig:antiderivative2}, we present the empirical RMSEs and optimal values of the regularization parameter $\alpha$ driven by 100 runs for Example \ref{example_2}. As can be observed in the left panel, the empirical RMSE by both non-stationary and stationary ARMs fits the theoretical rate accurately. At the same time, the increased smoothness allows better performance of the stationary ARM.
	
	\begin{figure}[!htb]
		\footnotesize
		\setlength{\fwidth}{.39\textwidth}
		\setlength{\fheight}{4cm}
		\centering
		\begin{tabular}{rr}
			\begin{tikzpicture}[baseline]

\begin{axis}[%
width=\fwidth,
height=\fheight,
scale only axis,
xmode=log,
xmin=100,
xmax=100000,
xminorticks=true,
ymode=log,
ymin=0.115133441400089,
ymax=1.74207093857127,
yminorticks=true,
title style={font=\bfseries},
title={RMSE}
]
\addplot [color=blue]
  table[row sep=crcr]{%
100	1.28021601213794\\
200	0.988229486047453\\
400	0.731971655643609\\
800	0.604877305562461\\
1600	0.469436637410319\\
3200	0.342558397609746\\
6400	0.25695117221629\\
12800	0.185088888093272\\
25600	0.148072314318727\\
51200	0.115133441400089\\
};

\addplot [color=red]
  table[row sep=crcr]{%
100	1.72852284638097\\
200	1.34956533820321\\
400	1.14351534470454\\
800	0.896520319746604\\
1600	0.780324704903763\\
3200	0.645841395426589\\
6400	0.483977905966007\\
12800	0.420474742115561\\
25600	0.359923531618329\\
51200	0.283024494305946\\
};

\addplot [color=black,dashed,thick]
  table[row sep=crcr]{%
400	1.74207093857127\\
800	1.42319831358691\\
1600	1.16269285879816\\
3200	0.949870914681696\\
6400	0.776004383041341\\
};

\end{axis}
\end{tikzpicture}

\begin{axis}[%
width=\fwidth,
height=\fheight,
scale only axis,
xmode=log,
xmin=100,
xmax=100000,
xminorticks=true,
ymode=log,
ymin=0.1,
ymax=102.4,
yminorticks=true,
title style={font=\bfseries},
title={Optimal alpha}
]
\addplot [color=blue]
  table[row sep=crcr]{%
100	0.4\\
200	0.8\\
400	1.6\\
800	3.2\\
1600	3.2\\
3200	6.4\\
6400	12.8\\
12800	25.6\\
25600	51.2\\
51200	102.4\\
};

\addplot [color=red]
  table[row sep=crcr]{%
100	0.2\\
200	0.4\\
400	0.4\\
800	0.8\\
1600	1.6\\
3200	1.6\\
6400	3.2\\
12800	6.4\\
25600	6.4\\
51200	12.8\\
};

\end{axis}
\end{tikzpicture}\\
		\end{tabular}
		\caption{(Intermediate smoothness) Empirical RMSEs (left) and optimal values of the regularization parameter $\alpha$ (right) for solving the second anti-derivative problems (simulated from $100$ runs) with $u^\dagger$ as in Example \ref{example_2}. Shown are stationary ARM (\ref{stat}), non-stationary ARM with $p = 0$ (\ref{non_stat}), cf. Assumption \ref{assp_2}, as well as the optimal rate of convergence (\ref{opt}).}
		\label{fig:antiderivative2}
	\end{figure}
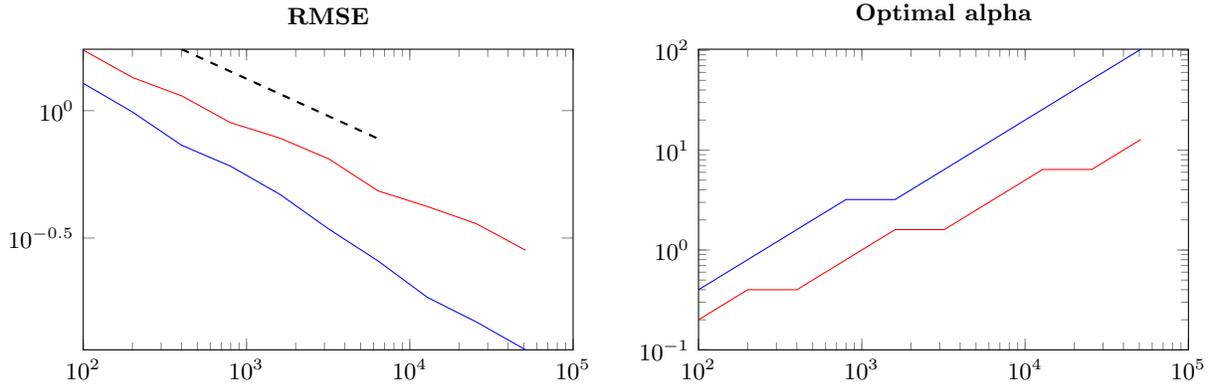

	\begin{example}[Exact solutions with high smoothness]\label{example_3}
		$u^\dagger$ is chosen as a scaled version of $A^*A$ applied to the above hat function, i.e.
		\[
		u^\dagger(x) = 100\begin{cases} \frac{x\left(16x^4 - 40x^2 + 25\right)}{1920} & \text{if }0 \leq x \leq \frac12,\\ \frac{-16x^5 + 80x^4-120x^3 + 40x^2 + 15x + 1}{1920} & \text{if } \frac12 \leq x \leq 1.\end{cases}
		\]
		This leads to the exact datum
		\[
		\left(Au^\dagger \right)(x) =  100\begin{cases} \frac{427 x-700 x^3+336 x^5-64 x^7}{322560}& \text{if }0 \leq x \leq \frac12,\\
		\frac{-1+441 x-84 x^2-420 x^3-560 x^4+1008 x^5-448 x^6+64 x^7}{322560} & \text{if } \frac12 \leq x \leq 1.\end{cases}
		\]
		We obtain $u^\dagger \in R \left(\left(A^*A\right)^{\frac{11}{8} - \varepsilon}\right)$ for all $\varepsilon > 0$, and consequently the optimal rate of convergence of the RMSE for this choice of $A$ and $u^\dagger$ in \eqref{eq_LInP} is $\mathcal O \left(\delta^{\frac{11}{16}-\varepsilon}\right)$, i.e. $\mathcal O \left(\left(\frac{1}{\sqrt{T}}\right)^{\frac{11}{16}-\varepsilon}\right)$ for any $\varepsilon>0$. Note that, when $p=0$, this function is smoother than the qualification of the non-stationary ARM allows for, as the best possible rate to be obtained by Tikhonov regularization in the model \eqref{eq_LInP} with the operator $A$ described here is $\mathcal O \left(\delta^{\frac{8}{13}}\right)$, i.e. $\mathcal O \left(\left(\frac{1}{\sqrt{T}}\right)^{\frac{8}{13}}\right)$.
		
	\end{example}

	In Figure \ref{fig:antiderivative3}, we present the empirical RMSEs and optimal values of the regularization parameter $\alpha$ driven by 100 runs for Example \ref{example_3}. It can be seen from the left panel, that the non-stationary ARM with $p = 0$ saturates at the rate $\mathcal O \left(\left(\frac{1}{\sqrt{T}}\right)^{\frac{8}{13}}\right)$ and does not yield order-optimal results. Hence, we also implemented the non-stationary ARM with $p = \frac12$, which yields order optimal convergence as visible in the left panel of Figure \ref{fig:antiderivative3}. Furthermore, the stationary ARM fits the theoretical rate accurately.
	
	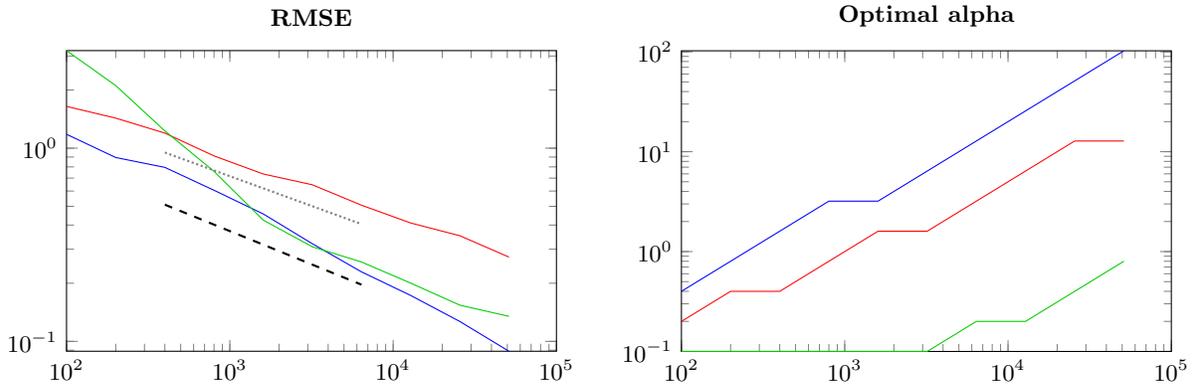
\begin{figure}[!htb]
		\footnotesize
		\setlength{\fwidth}{.39\textwidth}
		\setlength{\fheight}{4cm}
		
		\centering
		
		\begin{tabular}{rr}
			
			\begin{tikzpicture}[baseline]

\begin{axis}[%
width=\fwidth,
height=\fheight,
scale only axis,
xmode=log,
xmin=100,
xmax=100000,
xminorticks=true,
ymode=log,
ymin=0.0888990669829668,
ymax=3.19914768391837,
yminorticks=true,
title style={font=\bfseries},
title={RMSE}
]
\addplot [color=blue]
  table[row sep=crcr]{%
100	1.18009910928714\\
200	0.89486885871978\\
400	0.795480645859441\\
800	0.605541573177289\\
1600	0.456933519975692\\
3200	0.320203943519684\\
6400	0.229024012543153\\
12800	0.172903074559242\\
25600	0.126792662300592\\
51200	0.0888990669829668\\
};

\addplot [color=red]
  table[row sep=crcr]{%
100	1.64522024254779\\
200	1.43076515706874\\
400	1.19677476559771\\
800	0.911909072878402\\
1600	0.73539191440063\\
3200	0.646544808985822\\
6400	0.506777567545434\\
12800	0.410031027988726\\
25600	0.352286036681845\\
51200	0.27305149735181\\
};

\addplot [color=green!80!black]
  table[row sep=crcr]{%
100	3.19914768391837\\
200	2.10937207947268\\
400	1.22859327935497\\
800	0.758684920792049\\
1600	0.424633208451313\\
3200	0.308690528700685\\
6400	0.257627114702508\\
12800	0.200221709818934\\
25600	0.154010657307047\\
51200	0.134742606160389\\
};
\label{non_stat_improved}

\addplot [color=black,dashed,thick]
  table[row sep=crcr]{%
400	0.510037219278843\\
800	0.401904443937774\\
1600	0.316696852604834\\
3200	0.249554086705587\\
6400	0.196646230233199\\
};

\addplot [color=gray, densely dotted,thick]
  table[row sep=crcr]{%
400	0.94954905718623\\
800	0.767172086801012\\
1600	0.619823700852972\\
3200	0.500776066737583\\
6400	0.40459354599067\\
};
\label{sat}

\end{axis}
\end{tikzpicture}

\begin{axis}[%
width=\fwidth,
height=\fheight,
scale only axis,
xmode=log,
xmin=100,
xmax=100000,
xminorticks=true,
ymode=log,
ymin=0.1,
ymax=102.4,
yminorticks=true,
title style={font=\bfseries},
title={Optimal alpha},
]
\addplot [color=blue]
  table[row sep=crcr]{%
100	0.4\\
200	0.8\\
400	1.6\\
800	3.2\\
1600	3.2\\
3200	6.4\\
6400	12.8\\
12800	25.6\\
25600	51.2\\
51200	102.4\\
};

\addplot [color=red]
  table[row sep=crcr]{%
100	0.2\\
200	0.4\\
400	0.4\\
800	0.8\\
1600	1.6\\
3200	1.6\\
6400	3.2\\
12800	6.4\\
25600	12.8\\
51200	12.8\\
};

\addplot [color=green!80!black]
  table[row sep=crcr]{%
100	0.1\\
200	0.1\\
400	0.1\\
800	0.1\\
1600	0.1\\
3200	0.1\\
6400	0.2\\
12800	0.2\\
25600	0.4\\
51200	0.8\\
};

\end{axis}
\end{tikzpicture}
			
		\end{tabular}
		
		\caption{(High smoothness) Empirical RMSEs (left) and optimal values of the regularization parameter $\alpha$ (right) for solving the second anti-derivative problems (simulated from $100$ runs) with $u^\dagger$ as in Example \ref{example_3}. Shown are stationary ARM (\ref{stat}), non-stationary ARM with $p= 0$ (\ref{non_stat}), cf. Assumption \ref{assp_2}, and non-stationary ARM with $p = \frac12$ (\ref{non_stat_improved}), as well as the optimal rate of convergence in the corresponding example with $p=1/2$ (\ref{opt}) and the saturation rate of non-stationary ARM with $p = 0$ (\ref{sat}).}
		\label{fig:antiderivative3}
		
	\end{figure}
	
	\section{Conclusion and future extensions}\label{se5}
	In this paper we investigate asymptotical regularization for linear inverse problems in presence of white noise. By arguing that the available data often arises from subsequent identical measurements, we analyze two different methods for a continuous artificial dynamical system related to the original problem. Those are the non-stationary ARM, which is a consequence of the Kalman-Bucy filter where the posterior covariance varies with respect to the time variable, and the stationary ARM, which is a consequence of the 3DVAR with a fixed posterior covariance. This bridges a gap between regularization theory and data assimilation. Both methods have the advantage that they can be applied in an online fashion to real world problems, whereas standard methods from regularization theory can only be applied after the final datum has been measured.
	
	Methodologically, we derive error bounds for both methods by carefully treating the variance part which is given in terms of an infinite-dimensional stochastic integral against a standard Wiener process. Our theoretical results reveal that the non-stationary ARM is comparable to Tikhonov regularization, and that the derived convergence rates are minimax optimal. Meanwhile, the stationary ARM is comparable to the Showalter regularization, and our error bound seems sub-optimal. From our viewpoint it is not clear if this sub-optimality results from our analysis or the subsequent formulation of the underlying model \eqref{eq_consys}. Nevertheless, the high qualification of the stationary ARM is able to provide better error bounds if the unknown exact solution is sufficiently smooth. Numerical examples confirm these theoretical predictions.
	
	As of now, we have only considered a priori parameter choice rules for the tuning parameter $\alpha$ in the initial covariance. The a posteriori choice of $\alpha$ remains an interesting topic for future research, as those come with two difficulties: On the one hand, in our current formulation $\alpha$ has to be chosen in a preparation step before data comes in. Hence, $\alpha$ cannot be chosen depending on the data. However, one could think of an adaptive formulation, which allows to change $\alpha$ over time depending on the data. On the other hand, standard approaches such as the discrepancy principle or Lepski{\u\i}'s balancing principle might be applicable, but require a completely different analysis (e.g. with a.s. bounds instead of MSE bounds as derived in this paper).
	
	It might also be interesting to consider higher-order asymptotical regularization methods as recently treated in the deterministic setting in \cite{BDES2018,ZH2018}. There, the first-order governing ordinary differential equation is replaced by a high order one to reduce computational costs. The realization of such high-order extensions in present of the Wiener process can enrich the development of approaches in data assimilation.
	
	Finally, our current numerical examples focus on moderately ill-posed problems where the quantitative difference between the non-stationary and stationary ARMs can be visualized. Meanwhile, more numerical evidences could be carried out to support the theoretical predictions including severely ill-posed problems, where it is to be expected that the bias term dominates the MSE.
	
	\section*{Acknowledgments}
	SL is supported by NSFC (No.11925104), Program of Shanghai Academic/Technology Research Leader (19XD1420500) and National Key Research and Development Program of China (No. 2017YFC1404103). FW gratefully acknowledges financial support by the German Research Foundation DFG through subproject A07 of CRC 755. This project has been initiated during a stay of FW in Shanghai, which was partially financed by the CRC 755. We are furthermore grateful to Peter Math\'e and Housen Li for careful proof-reading of the paper and several helpful comments.

\end{document}